\DeclareSymbolFont{bchoperators}{T1}{bch}{m}{n}
\renewcommand{\operator@font}{\mathgroup\symbchoperators}
\titleformat{\section}{\normalfont\bfseries\filcenter}{\thesection}{1em}{}
\titleformat{\subsection}{\normalfont\bfseries\filcenter}{\thesubsection}{1em}{}
\newcommand{\Gr}[2]{\psfig{file=#1.pdf,width=#2}}
\newcommand{\C}{\mathbb{C}}
\newcommand{\HH}{\mathcal{H}}
\newcommand{\PP}{\mathbb{P}}
\newcommand{\Q}{\mathbb{Q}}
\newcommand{\R}{\mathbb{R}}
\newcommand{\Sph}{\mathbb{S}}
\newcommand{\Z}{\mathbb{Z}}
\newcommand{\calF}{\mathcal{F}}
\newcommand{\calM}{\mathcal{M}}
\newcommand{\calO}{\mathcal{O}}
\newcommand{\calT}{\mathcal{T}}
\newcommand{\calV}{\mathcal{V}}
\newcommand{\eps}{\varepsilon}
\newcommand{\GL}{\operatorname{GL}}
\newcommand{\PGL}{\operatorname{PGL}}
\newcommand{\SL}{\operatorname{SL}}
\newcommand{\SO}{\operatorname{SO}}
\newcommand{\SU}{\operatorname{SU}}
\renewcommand{\Re}{\operatorname{Re}}
\newcommand{\dist}{\operatorname{dist}}
\newcommand{\Aut}{\operatorname{Aut}}
\newcommand{\Hom}{\operatorname{Hom}}
\newcommand{\Res}{\operatorname{Res}}
\newcommand{\Mat}{\operatorname{Mat}}
\newcommand{\ordRes}{\operatorname{ordRes}}
\newcommand{\smm}[4]{\left(\begin{smallmatrix} #1 & #2 \\ #3 & #4 \end{smallmatrix}\right)}
\newcommand{\PBerk}{\PP^1_{\text{Berk}}} 
\newcommand{\HBerk}{\HH_{\text{Berk}}} 
\newcommand{\To}{\longrightarrow}
\numberwithin{equation}{section}
\newtheorem{theorem}{Theorem}[section]
\newtheorem{lemma}[theorem]{Lemma}
\newtheorem{corollary}[theorem]{Corollary}
\newtheorem{proposition}[theorem]{Proposition}
\theoremstyle{definition}
\newtheorem{definition}[theorem]{Definition}
\newtheorem{example}[theorem]{Example}
\newtheorem{algo}[theorem]{Algorithm}
\theoremstyle{remark}
\newtheorem{remark}[theorem]{Remark}
\definecolor{darkgreen}{rgb}{0,0.5,0}
\begin{document}

\title{Smallest representatives of $\SL(2,\Z)$-orbits \\
       of binary forms and endomorphisms of $\PP^1$}

\author{Benjamin Hutz}
\address{Department of Mathematics and Statistics,
         Saint Louis University,
         St.~Louis, MO, USA}
\email{benjamin.hutz@slu.edu}

\author{Michael Stoll}
\address{Mathematisches Institut,
         Universit\"at Bayreuth,
         95440 Bayreuth, Germany.}
\email{Michael.Stoll@uni-bayreuth.de}
\urladdr{http://www.mathe2.uni-bayreuth.de/stoll/}

\date{\today}

\begin{abstract}
  We develop an algorithm that determines, for a given squarefree
  binary form~$F$ with real coefficients, a smallest representative
  of its orbit under~$\SL(2,\Z)$, either with respect to the Euclidean
  norm or with respect to the maximum norm of the coefficient vector.
  This is based on earlier work of Cremona and Stoll~\cite{Cremona2}.
  We then generalize our approach so that it also applies to the
  problem of finding an integral representative of smallest height in the
  $\PGL(2,\Q)$ conjugacy class of an endomorphism of the projective line.
  Having a small model of such an endomorphism is useful for various
  computations.
\end{abstract}

\subjclass[2010]{
37P05, 
37P45, 
11C08, 
11Y99  
}

\maketitle


\section{Introduction}

Let $F = a_0 x^n + a_1 x^{n-1} y + \ldots + a_n y^n$ be a binary form with
real coefficients. We define its \emph{size} to be
\[ \|F\| = a_0^2 + a_1^2 + \ldots + a_n^2 \]
(this is the squared Euclidean norm of the coefficient vector)
and its \emph{height} to be the maximum norm
\[ H_\infty(F) = \max \{|a_0|, |a_1|, \ldots, |a_n|\} \,. \]
If $F$ has coefficients in~$\Z$
with $\gcd(a_0,\ldots,a_n) = 1$, then $H_\infty(F) = H(F)$
is the (multiplicative) global height of~$F$ in the sense that
it is the global height of the coefficient vector of~$F$, considered as a point
in projective space~$\PP^n$. If $F$ has coefficients in~$\Q$,
then $H(F) = H_0(F) H_\infty(F)$ with
\[ H_0(F) = \prod_{p \text{\ prime}} \max\{|a_0|_p, |a_1|_p, \ldots, |a_n|_p\} \,. \]
Since $H_0(F)$ does not change under the action of~$\SL(2,\Z)$, we
can use~$H_\infty(F)$ as a proxy for the global height~$H(F)$ for our purposes.

Our goal in this note will be to find, for a given~$F$
(without multiple factors, say), a smallest representative~$F_0$ in its
$\SL(2,\Z)$-orbit, in the sense that $\|F_0\|$ is minimal among all forms
in the orbit of~$F$, or in the sense that $H_\infty(F_0)$ (or equivalently,
$H(F_0)$ when $F_0$ has coefficients in~$\Q$) is minimal within the orbit.

More generally, we may want to consider some kind of geometric object~$\Phi$
related to the projective line (over~$\Q$, say), given by some polynomials
with respect to some chosen coordinates on~$\PP^1$. Then we usually can associate
to~$\Phi$ (in a natural, i.e., coordinate-independent way) a finite
set of points on~$\PP^1$, or equivalently, a binary form $F = F(\Phi)$.
``Natural'' means that this association is compatible with
the action of $\PGL(2) = \Aut(\PP^1)$ on both sides. In this way, we
can set up a ``reduction theory'' for the objects~$\Phi$ by selecting
a suitable ``reduced'' representative $F \cdot \gamma$ in the $\SL(2,\Z)$-orbit
of~$F$ and declaring $\Phi \cdot \gamma$ to be the reduced representative
in the $\SL(2,\Z)$-orbit of~$\Phi$. This is the approach taken
in~\cite{Stoll2011b} in the setting of~$\PP^n$ for general~$n$.
This works reasonably well when we just want to have a way of selecting
a canonical representative of moderate size. If we want to find a
representative of smallest height, then more work is required:
we have to relate the height of~$\Phi$ to the size of~$F(\Phi)$
and determine a bound on the distance we can move from the canonical representative
 without increasing the height. The application we have
in mind is to endomorphisms of~$\PP^1$; this is discussed in some detail
in Section~\ref{S:endo} below.

\subsection*{Main Results}

Our main results are as follows.
\begin{enumerate}\addtolength{\itemsep}{1mm}
  \item We prove a result (Theorem~\ref{T:bound}) that bounds the size of a
        binary form~$F$ in terms of its Julia invariant~$\theta(F)$ (see below)
        and the hyperbolic distance of its covariant~$z(F)$ (see below) to
        the ``center''~$i$ of the hyperbolic plane~$\HH$.
  \item Based on this result, we construct an algorithm (Algorithm~\ref{Algo:main})
        that determines a representative of smallest size in the $\SL(2,\Z)$-orbit
        of a given binary form.
  \item We generalize this algorithm so that it can be used with different
        notions of ``size'' and for more general objects associated to~$\PP^1$.
        We apply this specifically to automorphisms of~$\PP^1$; see Section~\ref{sect_ds}.
  \item In the latter context, we also show how to find minimal representatives
        up to $\GL(2,\Q)$-conjugacy. This involves the determination of all
        $\GL(2,\Z)$-orbits of minimal models; see the algorithms in Section~\ref{S:orbits}.
        In this context, we prove that an automorphism of even degree has
        only one orbit of minimal models (Proposition~\ref{prop:even minimal}),
        but there can be an arbitrary number of orbits for odd degree
        (Proposition~\ref{prop:odd minimal}).
\end{enumerate}

\subsection*{Structure of the paper}

The paper proceeds as follows. In Section~\ref{S:endo} we review the theory
of endomorphisms of~$\PP^1$ and in Section~\ref{S:red}, we recall the reduction theory
for binary forms as developed in~\cite{Cremona2}.
This provides us with the canonical representative of an $\SL(2,\Z)$-orbit
of (real) binary forms, as follows. We associate to a binary form~$F$
a point~$z(F)$ in the upper half-plane; this association is $\SL(2,\R)$-equivariant.
Then $F$ is the canonical representative if $z(F)$ is in the standard
fundamental domain for the action of~$\SL(2,\Z)$ on the upper half-plane.
The main result in Cremona-Stoll~\cite{Cremona2} is that $z(F)$ is well-defined
and that the canonical representative should correspond to a model of~$F$ with small size.
However, this canonical representative may not be the model with \emph{smallest} size,
but the expectation in their work is that it should be near, in terms of hyperbolic
distance of~$z(F)$, to the smallest model.

Section~\ref{sect_bound} is the heart of the paper.
Here we make explicit the relation of the size~$\|F\|$ of a binary form~$F$ to its
``Julia invariant''~$\theta(F)$ and the hyperbolic distance of~$z(F)$
to the point~$i$ (note that we will denote this point mostly by~$j$ instead).
Specifically, in Theorem \ref{T:bound} we prove an explicitly computable bound on how far,
in terms of the hyperbolic distance, the covariant~$z(F)$ of the smallest representative
can be from the point~$i$. This allows us to solve the problem of minimizing the size with
respect to the Euclidean norm within an $\SL(2,\Z)$-orbit of binary forms by reducing the
problem to a finite search space of elements of~$\SL(2,\Z)$. We can
enumerate them and find the element that gives the smallest representative.
For this method to be practical, we need the bound on the distance to be of reasonable size.
The examples included in this paper demonstrate that the presented bounds do result
in a practical algorithm. Since this size and the height are fairly
closely related, we also obtain a similar way of minimizing the height.
Section~\ref{S:algo} spells out the resulting algorithm in some detail
and gives an example.

Section~\ref{sect_ds} then applies
this method to the problem of finding a model of minimal height of a given
endomorphism of~$\PP^1$. This problem has not been addressed before in the literature
and has practical applications for other algorithms related to dynamical systems as
discussed in Section~\ref{S:endo}. The explicitly computable distance bound in this case
is given in Corollary~\ref{C:endobound}. This method is demonstrated with another example.
Finally, in Section~\ref{S:orbits}, we answer some questions concerning the number
of distinct $\GL(2,\Z)$-orbits of integral models with the same (minimal) resultant,
posed by Bruin and~Molnar~\cite{Bruin3}. In particular, Bruin and Molnar gave an example
in odd degree of an endomorphism with multiple distinct $\GL(2,\Z)$-orbits of minimal models
and asked if an even degree example were possible. Since the algorithm of Section~\ref{sect_ds}
finds a smallest model in a single $\SL(2,\Z)$-orbit, we must have a way to check all distinct
$\SL(2,\Z)$-orbits to determine a smallest representative of the entire conjugacy class.
We prove that any even degree endomorphism has a single $\GL(2,\Z)$-orbit of minimal models
(Proposition~\ref{prop:even minimal}) and extend the Bruin-Molnar example to demonstrate
odd degree endomorphisms with arbitrarily many distinct orbits of minimal models
(Proposition~\ref{prop:odd minimal}). In addition, we provide a simple algorithm to compute
a representative from all $\GL(2,\Z)$-orbits of minimal models of a given endomorphism of~$\PP^1$,
extending the algorithm from Section~\ref{sect_ds} to finding a minimal representative over
the entire conjugacy class of the endomorphism.

\noindent\textbf{Acknowledgments.}
We thank the anonymous referee for some helpful comments.


\section{Endomorphisms of the projective line} \label{S:endo}

Let $\Hom_d$ be the space of degree~$d$ endomorphisms of~$\PP^1$.
There is a natural action of~$\PGL(2)$ by conjugation on~$\Hom_d$.
We define the quotient space for this action as $\calM_d = \Hom_d/\PGL(2)$.
Levy~\cite{Levy} proved that $\calM_d$ exists as a geometric quotient,
which we call the \emph{moduli space of dynamical systems of degree~$d$
morphisms of~$\PP^1$}.
For $f \in \Hom_d$, we denote the corresponding element of the moduli space
as $[f] \in \calM_d$. After choosing coordinates, we can write $f$ as a pair
of degree~$d$ homogeneous polynomials, which we call a \emph{model} for~$[f]$.
Given $\alpha \in \PGL(2)$, we denote the conjugate as
$f^{\alpha} = \alpha^{-1} \circ f \circ \alpha$.
When working over an infinite field, there are infinitely many models for
any conjugacy class. The choice of a ``good'' model can affect a variety
of properties and algorithms.
For example, the \emph{field of definition} of a model~$f$ is the smallest
field containing the coefficients of~$f$. Different models of $[f] \in \calM_d$
may have different fields of definition. The fields of definition are studied
in~\cites{Hutz11,Silverman12}.
Here we are considering models defined over~$\Q$. In this context,
an \emph{integral model} of~$[f]$ is a model of~$f$ consisting of
a pair of polynomials with coefficients in~$\Z$. Any model over~$\Q$
can be scaled to give an integral model with the property that the coefficients
of the two polynomials are coprime.

Given an integral model~$f$,
we define the \emph{resultant}~$\Res(f)$ to be the resultant
of its two defining polynomials considered both as having degree $d$.
When working over a global field, reducing
modulo primes can yield information on the arithmetic dynamics of~$f$.
In particular, this local-global transfer of information is a key piece
of an algorithm to determine all rational preperiodic points for a given
map~\cite{Hutz12}. However, whether reduction modulo the prime commutes
with iteration affects what local information can be obtained.
A \emph{prime of good reduction} is a prime~$\mathfrak{p}$ such that reduction
modulo~$\mathfrak{p}$ commutes with iteration.
For $f \in \Hom_d$, the primes of good reduction are the primes that do
not divide the resultant. Consequently, the problem of finding an integral model
of $[f] \in \calM_d$ with smallest (norm of the) resultant is important.
Such a model is called
a \emph{minimal model for~$[f]$}, and an algorithm to determine a minimal model
is given by Bruin and Molnar~\cite{Bruin3}. This algorithm is implemented in the
computer algebra system Sage~\cite{sage}. It is important to note that minimal
models are, in general, not unique.
For example, conjugating by an element of~$\SL(2, \Z)$ leaves the resultant
unchanged. Consequently, there are infinitely many minimal models
for a given~$[f]$.
Furthermore, we prove in Proposition~\ref{prop:odd minimal}
that there are maps with arbitrarily many distinct $\SL(2,\Z)$-orbits of minimal models.
Choosing a ``best'' minimal model depends on the application in mind.
For example, when working with polynomials, it is conventional to move the totally
ramified fixed point to the point at infinity. Similarly, there are models that
are convenient for working with critical points~\cite{Ingram2}
or multipliers~\cite{Milnor}. The focus of this article is a minimal model of minimal
height.
For a model $f = [F : G] \in \Hom_d$ with homogeneous polynomials $F$ and~$G$,
we define the \emph{height of~$f$} as
\[ H(f) = H(F,G)\,, \]
i.e., the height of the concatenated coefficient vectors. Note that this height
does not change if we replace $[F : G]$ by~$[\lambda F : \lambda G]$.

There are a number of properties and algorithms where bounds are dependent
on~$H(f)$. For example, in the algorithm to determine all rational preperiodic
points~\cite{Hutz12} an upper bound on the height of a rational preperiodic
point is determined that depends on~$H(f)$.
Furthermore, the minimal height over the models with minimal resultant
defines a height on the moduli space~$\calM_d$.
See~\cite{Silverman10}*{Conjecture~4.98} for a dynamical version of Lang's height
conjecture related to heights on $\calM_d$.

\begin{definition}
  Given $[f] \in \calM_d$, we call $g$ a \emph{reduced model of $[f]$} if $g$
  is a minimal model for~$[f]$ with smallest height~$H(g)$.
\end{definition}

We will use the ideas of~\cite{Cremona2} together with the new bounds
from Section~\ref{sect_bound} to devise an algorithm that finds a model
of smallest height in the $\SL(2,\Z)$-orbit of a given model. Together
with an extension of the algorithm from~\cite{Bruin3} that finds
a representative in each $\GL(2,\Z)$-orbit of minimal models of a given~$[f]$,
this results in a procedure that produces a reduced model of any given~$[f] \in \calM_d$.
Note that the minimal height in the $\GL(2,\Z)$-orbit of a given model
is the same as the minimal height in its $\SL(2,\Z)$-orbit, so it is sufficient
to apply our reduction algorithm to a representative of each $\GL(2,\Z)$-orbit
of minimal models of~$[f]$.


\section{Reduction of binary forms} \label{S:red}

We recall the reduction theory of binary forms, as described in~\cite{Cremona2}
(following earlier work of Hermite~\cite{Hermite} and Julia~\cite{Julia}).
In the following, the degree~$n$ of the form is always assumed to be at least~$3$.
The space of binary forms of degree~$n$ over a ring~$R$ is denoted~$R[x,y]_n$.

We take a more general approach than in the introduction
and consider a binary form with complex (instead of real) coefficients
\[ F = a_0 x^n + a_1 x^{n-1} y + \ldots + a_n y^n \in \C[x,y]_n \,. \]
In this context, the \emph{size}~$\|F\|$ of~$F$ is defined as
\[ \|F\| = |a_0|^2 + |a_1|^2 + \ldots + |a_n|^2
         = \int_0^1 \left|F(e^{2\pi i \phi}, 1)\right|^2\,d\phi \,.
\]
The groups $\GL(2,\C)$ and~$\SL(2,\C)$ act on the space~$\C[x,y]_n$ of
binary forms of degree~$n$ via linear substitution of the variables;
this is an action on the right. Concretely,
\[ F(x,y) \cdot \begin{pmatrix} a & b \\ c & d \end{pmatrix} = F(ax+by, cx+dy) \,. \]
We write $\HH_3 = \C \times \R_{>0}$ for three-dimensional hyperbolic space in the upper
half-space model; we write $t + uj$ for
the point $(t,u) \in \C \times \R_{>0} = \HH_3$. We identify the hyperbolic
upper half-plane~$\HH$ with the subset $\R \times \R_{>0}$ of~$\HH_3$.
There is a standard left action of~$\SL(2,\C)$ on~$\HH_3$ that restricts
to the standard action of~$\SL(2,\R)$ on~$\HH$ via M\"obius transformations.
The standard fundamental domain of the action of~$\SL(2,\Z)$ on~$\HH$ is
\[ \calF = \{t + uj \in \HH : |t| \le \tfrac{1}{2}, t^2 + u^2 \ge 1\} \,. \]

The main idea followed in~\cite{Cremona2} is to set up a map
\[ z \colon \C[x,y]'_n \To \HH_3 \]
(where $\C[x,y]'_n$ is a suitable subset of~$\C[x,y]_n$ that contains
the squarefree forms) that is covariant with respect to the $\SL(2,\C)$-actions
on both sides, in the sense that
\[ z(F \cdot \gamma) = \gamma^{-1} \cdot z(F) \qquad
   \text{for all $F \in \C[x,y]'_n$ and all $\gamma \in \SL(2,\C)$.}
\]
We also require $z$ to be compatible with complex conjugation in the
sense that if $z(F) = t + uj$, then $z(F^c) = \bar{t} + uj$,
where $F^c$ denotes the polynomial obtained from~$F$
by replacing each coefficient with its complex conjugate.
This ensures that $z(F) \in \HH$ when $F$ has real coefficients.
For such~$F$ we then say that $F$ is \emph{reduced} when $z(F) \in \calF$.
Since $\calF$ is a fundamental domain for the $\SL(2,\Z)$-action,
there will be a reduced representative~$F_0$ in each $\SL(2,\Z)$-orbit.
If $z(F_0)$ is in the interior of~$\calF$, then $F_0$ is uniquely determined
(up to sign when $n$ is odd, since $-I_2 \in \SL(2,\Z)$ acts trivially on~$\HH$).
When $z(F_0)$ is on the boundary of~$\calF$, there is some ambiguity,
which can be resolved by removing part of the boundary. However, for practical
purposes, this ambiguity is usually not a problem.
We also note that it is impossible to find out whether a numerically
computed point is exactly on the boundary.

There are many choices for this map~$z$ when $n \ge 5$ (for $n = 3$ or~$4$
there is only one choice, which is forced by the symmetries of cubics
and quartics; see~\cite{Cremona2}*{Prop.~3.4}). The choice made
by Julia~\cite{Julia} and in~\cite{Cremona2} can be described in the following way.
We write $F \in \C[x,y]_n$ as
\[ F(x,y) = \prod_{k=1}^n (\beta_k x - \alpha_k y) \]
with $\alpha_k, \beta_k \in \C$. Then we define, for $t+uj \in \HH_3$,
\[ R(F, t+uj) = \prod_{k=1}^n \frac{|\alpha_k - \beta_k t|^2 + |\beta_k|^2 u^2}{u} \in \R_{\ge 0} \,; \]
we note that $R(F, t+uj) > 0$ when $F \neq 0$.
It can be checked that $R$ is invariant under the $\SL(2,\C)$-action:
$R(F \cdot \gamma, z) = R(F, \gamma \cdot z)$.

Now we take $\C[x,y]'_n \subset \C[x,y]_n$ to be the subset of
stable forms, where a form~$F$ is \emph{stable} when no linear factor of~$F$
has multiplicity $\ge n/2$. It is shown in~\cite{Cremona2}*{Prop.~5.1}
that for $F \in \C[x,y]'_n$,
the function $\HH_3 \to \R_{>0}$, $z \mapsto R(F, z)$, has a unique
minimizer~$z(F)$; we define the \emph{Julia invariant} of~$F$ to be
the minimal value,
\[ \theta(F) = R(F, z(F)) > 0 \,. \]
There is a nice geometric interpretation of~$z(F)$. Namely, $z(F)$ is
the unique point in~$\HH_3$ with the property that the sum of the unit
tangent vectors pointing in the direction of the $n$~roots of~$F$
(in~$\PP^1(\C)$, considered as the ideal boundary of~$\HH_3$) vanishes;
see~\cite{Cremona2}*{Cor.~5.4}.


\section{Bounds for the size of a binary form} \label{sect_bound}

Our goal in this section is to relate $\theta(F)$ and the size~$\|F\|$
of a form $F \in \C[x,y]'_n$.
The first step is a comparison between $\|F\|$ and~$R(F, j)$.
Note that for $F = \prod_{k=1}^n (\beta_k x - \alpha_k y)$ we have that
\[ R(F, j) = \prod_{k=1}^n (|\alpha_k|^2 + |\beta_k|^2) \,. \]

\begin{proposition} \label{P:bound1}
  Let $F \in \C[x,y]_n$. Then
  \[ 2^{1-n} R(F, j) \le \|F\| \le 2^{-n} \binom{2n}{n} R(F,j) \,. \]
  If $F \in \C[x,y]'_n$ with $z(F) = j$, then $\|F\| \le R(F,j)$.
\end{proposition}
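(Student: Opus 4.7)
The plan is to exploit the integral representation
$$\|F\| = \int_0^1 |F(e^{2\pi i \phi},1)|^2\,d\phi
  = \int_0^1 \prod_{k=1}^n \bigl(A_k + B_k e^{2\pi i\phi} + \bar B_k e^{-2\pi i\phi}\bigr)\,d\phi,$$
where $A_k = |\alpha_k|^2 + |\beta_k|^2$ and $B_k = -\bar\alpha_k \beta_k$; note that $|B_k|\le A_k/2$ by AM--GM, with equality precisely when $|\alpha_k|=|\beta_k|$. Expanding the product and picking out the constant Fourier coefficient yields
$$\|F\| = \sum_{(S_+, S_-)} \Bigl(\prod_{k \in S_+} B_k\Bigr)\Bigl(\prod_{k \in S_-} \bar B_k\Bigr)\prod_{k \notin S_+ \cup S_-} A_k,$$
the sum running over ordered pairs of disjoint subsets of $\{1,\ldots,n\}$ with $|S_+|=|S_-|$.

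For the upper bound, I would estimate each summand with $|S_\pm|=m$ by $4^{-m}R(F,j)$ using $|B_k|\le A_k/2$ and count the $\binom{n}{m,m,n-2m}$ such pairs. The resulting total $\sum_m \binom{n}{m,m,n-2m}4^{-m}$ is precisely the $\phi$-average of $(1+\cos 2\pi\phi)^n = 2^n\cos^{2n}(\pi\phi)$, whose integral evaluates to $2^{-n}\binom{2n}{n}$. Recognizing the combinatorial sum in this closed form is the main delicate step, since the naive term-by-term estimate gives only a weaker bound.

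For the lower bound, I would first use the scale invariance of $\|F\|/R(F,j)$: replacing the linear factor $\beta_k x - \alpha_k y$ by $\lambda_k(\beta_k x - \alpha_k y)$ multiplies both numerator and denominator by $|\lambda_k|^2$, so one may assume $A_k = 1$ for every $k$. With the phases of the $B_k$ held fixed, $\|F\|$ is a multi-affine function of $(|B_1|,\ldots,|B_n|)\in[0,1/2]^n$, so its minimum is attained at a vertex of this cube. At such a vertex each factor is either a monomial in $x$ or $y$ (when $B_k=0$) or satisfies $|\alpha_k|=|\beta_k|=1/\sqrt 2$ (when $|B_k|=1/2$). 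The monomial factors do not affect $\|F\|$, so we reduce to a subform $G = \prod_{k\in T}(\beta_k x - \alpha_k y)$ whose leading and constant coefficients (as a polynomial in $z = x/y$) each have modulus $2^{-|T|/2}$; hence $\|F\| = \|G\| \ge 2\cdot 2^{-|T|}\ge 2^{1-n}$.

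For the last assertion, the hypothesis $z(F) = j$ means that $j$ is a critical point of $R(F,\cdot)$; differentiating in $t$ at $(t,u) = (0,1)$ gives $\sum_k \alpha_k\bar\beta_k/A_k = 0$, which after renormalizing to $A_k = 1$ becomes $\sum_k B_k = 0$. Since each factor $1 + 2\Re(B_k e^{2\pi i\phi})$ is nonnegative (as $2|B_k|\le 1$), AM--GM applied pointwise in $\phi$ gives
$$|F(e^{2\pi i\phi},1)|^2 = \prod_k \bigl(1 + 2\Re(B_k e^{2\pi i\phi})\bigr) \le \Bigl(1 + \tfrac{2}{n}\Re\bigl(e^{2\pi i\phi}\textstyle\sum_k B_k\bigr)\Bigr)^n = 1,$$
and integrating over $\phi$ yields $\|F\|\le 1 = R(F,j)$.
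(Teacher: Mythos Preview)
Your proof is correct, and it differs from the paper's in interesting ways.

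For the general upper bound, the paper normalizes to $F=\prod_k(x-\alpha_k y)$, rewrites each factor of $|F(e(\phi),1)|^2$ as $\tfrac{1+|\alpha_k|^2}{2}\bigl(2-\beta_k e(-\phi)-\bar\beta_k e(\phi)\bigr)$ with $\beta_k = 2\alpha_k/(1+|\alpha_k|^2)$, applies AM--GM to the product, and then maximizes the resulting integral over $|\beta|\le 1$ to obtain~$\binom{2n}{n}$. Your route---expand the product, keep the constant Fourier coefficient, and bound termwise via $|B_k|\le A_k/2$ and the identity $\sum_m\binom{n}{m,m,n-2m}4^{-m}=2^{-n}\binom{2n}{n}$---is more combinatorial and avoids AM--GM for the general case, at the cost of needing to recognize that closed form. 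The paper's approach has the advantage that the $z(F)=j$ case falls out immediately (the averaged $\beta$ vanishes), whereas you treat that case by a separate AM--GM step; in fact your argument there, after the normalization $A_k=1$, is essentially the same as the paper's.

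For the lower bound, the paper uses a reflection trick ($\|GH\|=\|G\tilde H\|$ with $\tilde H(x,y)=H^c(y,x)$) to bring all roots into the closed unit disk, then compares $\|F\|\ge 1+\prod_k|\alpha_k|^2$ with $2^{1-n}\prod_k(1+|\alpha_k|^2)$ via a vertex argument on $[0,1]^n$. Your argument---normalize $A_k=1$, observe that $\|F\|$ is multi-affine in $(|B_1|,\dots,|B_n|)$, reduce to vertices, and read off the bound from the leading and trailing coefficients---is arguably cleaner and bypasses the reflection trick entirely. Both proofs ultimately hinge on ``extrema of a multi-affine function on a cube occur at vertices,'' but applied to different parametrizations.
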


\begin{proof}
  We begin with an observation related to the size of~$F$.
  Define $\tilde{F}(x,y) = F^c(y,x)$; then clearly
  $\|F\|$ is the coefficient of~$x^n y^n$ in~$F(x,y) \tilde{F}(x,y)$.
  Now assume that $F = G H$. Then
  \[ F \tilde{F} = G H \tilde{G} \tilde{H} = G \tilde{H} \cdot \widetilde{G \tilde{H}} \,, \]
  which implies that $\|F\| = \|GH\| = \|G \tilde{H}\|$.
  It is clear from the definition that
  \[ R(GH, j) = R(G, j) \cdot R(H, j) = R(G, j) \cdot R(\tilde{H}, j) = R(G\tilde{H}, j) \,. \]
  Writing $F = \prod_{k=1}^n (\beta_k x - \alpha_k y)$, this allows
  us to replace each factor $\beta_k x - \alpha_k y$ with $|\alpha_k| > |\beta_k|$
  by its reverse conjugate $-\bar{\alpha}_k x + \bar{\beta}_k y$.
  Since scaling~$F$ by a constant~$\lambda$ clearly scales both $\|F\|$
  and~$R(F, j)$ by~$|\lambda|^2$, we can in this way assume without loss of
  generality that
  \[ F = (x - \alpha_1 y) \cdots (x - \alpha_n y)
     \qquad\text{with \quad $|\alpha_1|, \ldots, |\alpha_n| \le 1$.}
  \]

  We now show the first inequality. We have that
  \[ \|F\| \ge 1 + \prod_{k=1}^n |\alpha_k|^2 \qquad\text{and}\qquad
     2^{1-n} R(F, j) = 2^{1-n} \prod_{k=1}^n (1 + |\alpha_k|^2) \,.
  \]
  Write $x_k = |\alpha_k|^2 \in [0,1]$ and observe that the difference of
  $2^{1-n} (1+x_1) \cdots (1+x_n)$ and $1 + x_1 \cdots x_n$ is an affine-linear
  function in each of the~$x_k$. This implies that the extrema of this
  difference must occur at some vertices of the unit cube~$[0,1]^n$.
  But it is easy to see that
  \[ 1 + x_1 \cdots x_n \ge 2^{1-n} (1+x_1) \cdots (1+x_n) \]
  whenever $x_1, \ldots, x_n \in \{0,1\}$. This proves the first inequality.

  We now turn to the second inequality. We write $e(\phi) = e^{2\pi i \phi}$
  for $\phi \in \R$. Fix $\alpha \in \C$ and set $\beta = 2 \alpha/(1 + |\alpha|^2)$.
  Then
  \[ |e(\phi) - \alpha|^2
       = \frac{1 + |\alpha|^2}{2} \bigl(2 - \beta e(-\phi) - \bar{\beta} e(\phi)\bigr) \,.
  \]
  So with $F$ as above (note, though, that we do not need to assume that
  all roots have absolute value $\le 1$), we have that
  \begin{align*}
     \|F\| &= \int_0^1 |F(e(\phi),1)|^2\,d\phi\\
           &= \prod_{k=1}^n \frac{1+|\alpha_k|^2}{2}
             \int_0^1 \prod_{k=1}^n \bigl(2 - \beta_k e(-\phi) - \bar{\beta}_k e(\phi)\bigr)\,d\phi\\
           &= 2^{-n} R(F, j)
             \int_0^1 \prod_{k=1}^n \bigl(2 - \beta_k e(-\phi) - \bar{\beta}_k e(\phi)\bigr)\,d\phi
  \end{align*}
  with $\beta_k = 2 \alpha_k/(1 + |\alpha_k|^2)$. Note that each factor
  in the product is a real number in~$[0,4]$:
  \[ 2 - \beta_k e(-\phi) - \bar{\beta}_k e(\phi) = 2 - 2 \Re(\beta_k e(-\phi)) \]
  and $|\beta_k| \le 1$. So we can apply the AGM inequality to the product.
  This gives
  \[ \|F\| \le 2^{-n} R(F, j) \int_0^1 \bigl(2 - \beta e(-\phi) - \bar{\beta} e(\phi)\bigr)^n \,d\phi \,, \]
  where $\beta = \sum_{k=1}^n \beta_k/n$ is the arithmetic mean of the~$\beta_k$.
  Now one can check that $\beta_k$ is the vertical projection to the unit disk
  of~$\alpha_k$, viewed as a point on the Riemann sphere, which forms the ideal
  boundary of the Poincar\'e ball model of~$\HH_3$. When $z(F) = j$, then
  the sum of the points on the Riemann sphere corresponding to the~$\alpha_k$
  vanishes by the geometric characterization of~$z(F)$, so in this case,
  we have that $\beta = 0$, which implies $\|F\| \le R(F, j)$.
  In the general case, expanding the product and integrating gives
  \[ \int_0^1 \bigl(2 - \beta e(-\phi) - \bar{\beta} e(\phi)\bigr)^n \,d\phi
       = \sum_{\ell=0}^\infty 2^{n-2\ell} \binom{n}{2\ell} \binom{2\ell}{\ell} |\beta|^{2\ell} \,,
  \]
  which shows that the maximum is attained when $|\beta|$ is maximal,
  so for $|\beta| = 1$. It is clear that the integral does not depend
  on the argument of~$\beta$, so we can take $\beta = -1$. Then the
  integrand simplifies to
  \[ \bigl(2 + e(\phi) + e(-\phi)\bigr)^n
      = \bigl(e(\tfrac{1}{2}\phi) + e(-\tfrac{1}{2}\phi)\bigr)^{2n}
  \]
  and so the value of the integral is the constant term of $(X + X^{-1})^{2n}$, which
  is~$\binom{2n}{n}$. This proves the second inequality in the general case.
\end{proof}

We note that both inequalities are sharp in the general case:
the lower bound is attained for $F = x^n - y^n$ (which has $z(F) = j$),
and the upper bound is
attained for $F = (x-y)^n$. The upper bound in the case $z(F) = j$
is sharp for $n$~even, in the sense that
\[ \sup_{F \colon z(F) = j} \frac{\|F\|}{R(F, j)} = 1 \,; \]
this is attained as $F$ gets close to $(xy)^{n/2}$. For odd~$n$,
we cannot balance the roots in this way while they tend to zero or~infinity,
so the bound will not be sharp, but it will not be too far off when $n$
is large.

From now on, we assume that $F$ is stable, i.e., $F \in \C[x,y]'_n$;
then $z(F)$ and~$\theta(F)$ are defined.

We want to relate $\|F\|$ and~$\theta(F)$. Proposition~\ref{P:bound1}
tells us that
\[ 2^{1-n} \le \frac{\|F\|}{\theta(F)} \le 1 \]
when $z(F) = j$, since then $\theta(F) = R(F, z(F)) = R(F, j)$.
This leads us to expect that $\|F\|$ can be bounded in terms
of~$\theta(F)$ and the (hyperbolic) distance between $z(F)$ and~$j$:
we have that
\[ \frac{\|F\|}{\theta(F)}
     = \frac{\|F\|}{R(F, z(F))}
     = \frac{\|F\|}{R(F, j)} \cdot \frac{R(F, j)}{R(F, z(F))} \,;
\]
the first factor is bounded above and below by Proposition~\ref{P:bound1},
and since $R(F, z)$ attains its minimum when $z = z(F)$, the second factor
should grow when $z(F)$ moves away from~$j$.

Let $\gamma \in \SL(2,\C)$ be such that $F_0 = F \cdot \gamma^{-1}$
satisfies $z(F_0) = j = \gamma \cdot z(F)$.
By the invariance of~$R$, we then have that
\[ \frac{R(F, j)}{R(F, z(F))}
     = \frac{R(F_0 \cdot \gamma, j)}{R(F_0 \cdot \gamma, z(F))}
     = \frac{R(F_0, \gamma \cdot j)}{R(F_0, \gamma \cdot z(F))}
     = \frac{R(F_0, \gamma \cdot j)}{R(F_0, j)} \,.
\]
If $\dist(z,z')$ denotes hyperbolic distance in~$\HH_3$, then,
since distance is invariant under the $\SL(2,\C)$-action,
$\dist(z(F), j) = \dist(\gamma \cdot j, j)$. So it is enough to
bound $R(F_0, z)/R(F_0, j)$ in terms of $\dist(z, j)$ when $z(F_0) = j$.

Now for $z = t + uj$, the distance to~$j$ is given by
\begin{equation} \label{E:coshdelta}
  \cosh \dist(z, j) = \frac{|t|^2 + u^2 + 1}{2u} \,.
\end{equation}
In particular, $\dist(e^\delta j, j) = |\delta|$.
We can assume that $F_0 = \prod_{k=1}^n (x - \alpha_k y)$. Let $\phi_k \in \Sph^2$
be the point on the Riemann sphere that corresponds to~$\alpha_k$
under stereographic projection (in coordinates in $\R^3 = \C \times \R$,
$\phi_k = \left(\frac{2\alpha_k}{|\alpha_k|^2+1},
                \frac{|\alpha_k|^2-1}{|\alpha_k|^2+1}\right)$;
the first component is~$\beta_k$ in the proof of Proposition~\ref{P:bound1}).
The condition $z(F_0) = j$ then is equivalent to $\sum_k \phi_k = 0$.
We derive a formula for the quotient $R(F_0,z)/R(F_0,j)$.

\begin{lemma} \label{L:Rquot}
  Let $0 \neq F_0 \in \C[x,y]_n$ and $z \in \HH_3$. We set $\delta = \dist(z, j)$
  and denote the unit tangent vector at~$j$ in the direction of~$z$ by~$\phi \in \Sph^2$
  ($\phi$ is arbitrary when $z = j$). Let $\phi_k \in \Sph^2$ be as in the preceding paragraph.
  Then
  \[ \frac{R(F_0, z)}{R(F_0, j)}
       = \prod_{k=1}^n (\cosh \delta + \langle \phi, \phi_k \rangle \sinh \delta) \,,
  \]
  where $\langle {\cdot}, {\cdot} \rangle$ denotes the standard
  inner product on~$\R^3$ (recall that $\phi, \phi_k \in \Sph^2 \subset \R^3$).
\end{lemma}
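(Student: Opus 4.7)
My plan is to reduce the identity via multiplicativity to a single linear factor, use the rotational symmetry of the stabilizer of $j$ in $\SL(2,\C)$ to place $z$ in canonical position, and finish with a direct algebraic check. From the definition of~$R$, writing $F_0 = c \prod_{k=1}^n (x - \alpha_k y)$, we have $R(F_0, z) = |c|^2 \prod_k R(x - \alpha_k y,\, z)$, and the same at $z = j$, while the right-hand side of the claimed formula is already a product over~$k$. Hence it suffices to prove the identity for a single linear form $L = x - \alpha y$, namely
\[ \frac{R(L, z)}{R(L, j)} = \cosh\delta + \langle \phi, \phi_\alpha \rangle \sinh\delta, \]
where $\phi_\alpha \in \Sph^2$ is the stereographic image of~$\alpha$.

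Next I would exploit rotational symmetry. The stabilizer of~$j$ in $\SL(2,\C)$ is, up to sign, $\SU(2)$, and it acts on $T_j\HH_3 \cong \R^3$ by the same copy of~$\SO(3)$ that acts on $\partial\HH_3 = \Sph^2$ under the stereographic identification used to define the~$\phi_k$. Thus for any~$\gamma$ in this stabilizer, replacing $L$ by $L \cdot \gamma^{-1}$ and~$z$ by $\gamma \cdot z$ leaves the left-hand side unchanged, by the $\SL(2,\C)$-invariance of~$R$ together with $\gamma \cdot j = j$; on the right-hand side, both $\phi$ and $\phi_\alpha$ are rotated by the same element of~$\SO(3)$, and so their inner product is preserved. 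Choosing~$\gamma$ so that $\gamma \cdot z = e^\delta j$, we may assume $z = e^\delta j$ lies on the standard vertical geodesic above~$j$; then $\phi$ is a definite vertical unit vector in $\Sph^2 \subset \C \times \R$.

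With this reduction a direct computation gives
\[ \frac{R(L, e^\delta j)}{R(L, j)} = \frac{|\alpha|^2 + e^{2\delta}}{e^\delta(|\alpha|^2 + 1)} = \frac{|\alpha|^2 e^{-\delta} + e^\delta}{|\alpha|^2 + 1}, \]
while the right-hand side reduces, using $\langle\phi, \phi_\alpha\rangle = \pm(|\alpha|^2 - 1)/(|\alpha|^2 + 1)$ and $\cosh\delta \pm \sinh\delta = e^{\pm\delta}$, to the same rational combination over the common denominator $|\alpha|^2 + 1$. The main delicate point will be to fix the orientation conventions consistently—the sign of the vertical direction~$\phi$ at~$j$ for $z = e^\delta j$ versus the handedness of stereographic projection for~$\phi_\alpha$—so that the inner product carries the correct sign for the asserted ``$+$'' form of the identity. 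Once this is pinned down, multiplicativity together with rotational symmetry reduces the full lemma to this short one-variable identity.
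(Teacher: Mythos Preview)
Your proposal is correct and follows essentially the same approach as the paper: both arguments use the $\SU(2)$-invariance of the two sides to reduce to the case $z = e^\delta j$ on the vertical geodesic, and then finish with the direct computation $\frac{|\alpha_k|^2 e^{-\delta} + e^\delta}{|\alpha_k|^2+1} = \cosh\delta + \langle\phi,\phi_k\rangle\sinh\delta$. The only cosmetic difference is that you first invoke multiplicativity to reduce to a single linear factor before rotating, whereas the paper rotates the full form and then computes the product factor by factor; your explicit flagging of the orientation/sign convention is a point the paper handles by assertion.
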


\begin{proof}
  Both sides do not change when we replace $F_0$ and~$z$
  by $F_0 \cdot \gamma$ and $\gamma^{-1} \cdot z$, respectively, for
  some $\gamma \in \SU(2)$. This allows us to assume that $\phi$ points upward;
  then $z = e^\delta j$ and $\langle \phi, \phi_k \rangle = (|\alpha_k|^2-1)/(|\alpha_k|^2+1)$.
  By definition of~$R$ and with $z = e^\delta j$, we obtain that
  \begin{align*}
    \frac{R(F_0, z)}{R(F_0, j)}
      &= \prod_{k=1}^n \frac{|\alpha_k|^2 e^{-\delta} + e^{\delta}}{|\alpha_k|^2 + 1} \\
      &= \prod_{k=1}^n \left(\frac{e^\delta + e^{-\delta}}{2}
                               + \frac{|\alpha_k|^2 - 1}{|\alpha_k|^2 + 1}
                                    \cdot \frac{e^\delta - e^{-\delta}}{2}\right) \\
      &= \prod_{k=1}^n (\cosh \delta + \langle \phi, \phi_k \rangle \sinh \delta) \,. \qedhere
  \end{align*}
\end{proof}

We can use this to deduce bounds for the $R$-quotient. We will derive
an upper bound assuming that $z(F_0) = j$. Regarding a lower bound,
note that the factor
$(\cosh \delta + \langle \phi, \phi_k \rangle \sinh \delta)$
in the product above can be as small as~$e^{-\delta}$ when $\phi_k = -\phi$.
So if lots of roots are in the direction opposite to~$z$, then the product
can get quite small. To get a reasonable bound, we assume that $F_0$ is
squarefree. Then the directions~$\phi_k$ cannot get too close to one
another, and so at most one factor can get really small. This approach
leads to the lower bound below.

\begin{proposition} \label{P:eps}
  Let $F_0 \in \C[x,y]'_n$ be squarefree and such that $z(F_0) = j$.
  There is a constant $\eps(F_0) > 0$ such that for all $z \in \HH_3$, we have that
  \[ \eps(F_0) (\cosh \delta)^{n-2} \le \frac{R(F_0, z)}{R(F_0, j)} \le (\cosh \delta)^n \,, \]
  where $\delta = \dist(z, j)$.
\end{proposition}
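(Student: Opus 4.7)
The plan is to start from the explicit product formula in Lemma~\ref{L:Rquot}. Writing $c_k = \langle \phi, \phi_k \rangle \in [-1,1]$ and $t = \tanh \delta \in [0,1)$, and using $(\cosh \delta + c_k \sinh \delta) = \cosh \delta \cdot (1 + c_k t)$, the lemma gives
\[
  \frac{R(F_0,z)}{R(F_0,j)} = (\cosh \delta)^n \prod_{k=1}^n (1 + c_k t).
\]
Since $z(F_0) = j$, the geometric characterization of $z(F_0)$ forces $\sum_k \phi_k = 0$, hence $\sum_k c_k = 0$. So both bounds reduce to controlling $P(\phi,t) \colonequals \prod_k (1 + c_k t)$ on $\Sph^2 \times [0,1)$: the upper bound amounts to $P \le 1$, and the lower bound amounts to $P \ge \eps(F_0)(1-t^2)$ (using $(\cosh\delta)^{n-2} = (\cosh \delta)^n (1-t^2)$).

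For the upper bound I would apply the elementary inequality $\log(1+x) \le x$ (valid since $|c_k t| < 1$) to get
\[
  \log P(\phi,t) = \sum_{k=1}^n \log(1 + c_k t) \le t \sum_{k=1}^n c_k = 0,
\]
so $P(\phi,t) \le 1$, yielding the claimed upper bound immediately.

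For the lower bound the key point is squarefreeness: the $\phi_k$ are pairwise distinct points on $\Sph^2$, so
\[
  m \colonequals \min_{k \ne l} \tfrac{1}{2}\|\phi_k - \phi_l\|^2 = \min_{k \ne l}(1 - \langle \phi_k, \phi_l\rangle) > 0.
\]
For a given direction $\phi$, at most one index $k_0$ can satisfy $\|\phi + \phi_{k_0}\|^2 < m$; for every other $k$ one has $1 + \langle \phi, \phi_k \rangle \ge m/2$, and this then gives a uniform lower bound $1 + c_k t \ge (m/2)\, t$ once $t$ is close to~$1$ (with a trivial uniform bound on all factors for $t$ bounded away from~$1$). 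The at most one ``bad'' factor $1 + c_{k_0} t$ is bounded below by $1 - t$, which absorbs the factor $1-t$ in $1-t^2$. Splitting the range into $t \in [0,\tfrac12]$ (where $P$ is continuous and bounded below on the compact $\Sph^2 \times [0,\tfrac12]$) and $t \in [\tfrac12,1)$ (where the two-scale estimate above applies) gives the desired positive constant $\eps(F_0)$.

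The main obstacle is the lower bound as $\delta \to \infty$, where a single factor $1 + c_k t$ can tend to zero if $\phi$ approaches $-\phi_k$; the content of the proof is to show this can only happen for one index at a time, and to quantify the resulting zero as simple so that it is cancelled exactly by the extra factor of $(1-t)$ between $(\cosh \delta)^n$ and $(\cosh \delta)^{n-2}$. Everything else is essentially compactness and elementary estimates.
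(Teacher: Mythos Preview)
Your upper bound is correct and is essentially the paper's AGM argument in logarithmic clothing: the inequality $\sum_k \log(1+c_k t) \le t\sum_k c_k = 0$ is exactly AGM applied to the factors $1+c_k t$, whose arithmetic mean is~$1$.

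Your lower bound follows the same strategy as the paper (isolate at most one ``bad'' factor near~$-\phi$, bound the rest uniformly away from zero), but the specific separation claim is false as stated. With $m = \min_{k\ne l}(1-\langle\phi_k,\phi_l\rangle)$, it is \emph{not} true that at most one index can satisfy $\|\phi+\phi_{k_0}\|^2 < m$: take $\phi_1,\phi_2$ realizing the minimum angle~$\theta$ and let $-\phi$ be the midpoint of the geodesic arc between them. Then $\|\phi+\phi_k\|^2 = 2(1-\cos(\theta/2)) = 4\sin^2(\theta/4)$ for $k=1,2$, while $m = 2\sin^2(\theta/2) = 8\sin^2(\theta/4)\cos^2(\theta/4)$, and $4\sin^2(\theta/4) < m$ whenever $\theta<\pi$. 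The fix is trivial: replace the threshold~$m$ by~$m/2$ (then two bad indices would force $\|\phi_k-\phi_l\|^2 < 2m$, contradicting $\|\phi_k-\phi_l\|^2 \ge 2m$), and correspondingly the good factors satisfy $1+c_k \ge m/4$ rather than~$m/2$. The paper handles this via the half-angle, defining $1-\eta = \max_{k\ne k'}\cos(\theta_{kk'}/2)$, which makes the ``at most one'' claim immediate from the triangle inequality for angles.

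One further difference: the paper obtains a single explicit bound $\frac{R(F_0,z)}{R(F_0,j)} \ge \tfrac{\eta^{n-1}}{2}(\cosh\delta)^{n-2}$ valid for all~$\delta$, by estimating each good factor as $\cosh\delta + (\eta-1)\sinh\delta \ge \eta\cosh\delta$ and the bad factor as $\ge e^{-\delta} \ge (2\cosh\delta)^{-1}$. Your split into $t\in[0,\tfrac12]$ (compactness) and $t\in[\tfrac12,1)$ (explicit estimate) also works once the threshold is corrected, but yields a less explicit constant; since the paper later wants to compute $\eps(F_0)$ in practice, the uniform explicit form is preferable.
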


\begin{proof}
  By Lemma~\ref{L:Rquot},
  \[ \frac{R(F_0, z)}{R(F_0, j)}
       = \prod_{k=1}^n (\cosh \delta + \langle \phi, \phi_k \rangle \sinh \delta) \,.
  \]
  Since $z(F_0) = j$, we have that $\sum_{k=1}^n \phi_k = 0$.
  Therefore, by the AGM inequality,
  \[ \prod_{k=1}^n (\cosh \delta + \langle \phi, \phi_k \rangle \sinh \delta)
      \le \bigl(\cosh \delta
        + \bigl\langle \phi, \frac{1}{n} \sum_{k=1}^n \phi_k \bigr\rangle \sinh \delta\bigr)^n
      = (\cosh \delta)^n \,.
  \]
  By assumption, the $\phi_k$ are pairwise distinct, which implies that
  \[ \eta = 1 - \max_{k \neq k'} \sqrt{\frac{\langle \phi_k, \phi_{k'} \rangle + 1}{2}}
          > 0 \,.
  \]
  Then for any~$\phi \in \Sph^2$, it follows that $\langle \phi, \phi_k \rangle > 1 - \eta$
  for at most one~$k$, say $k_0$ ($1-\eta$ is the cosine of half the angle between the
  closest two~$\phi_k$). Applying this to~$-\phi$, we see that for all $k \neq k_0$,
  \[ \cosh \delta + \langle \phi, \phi_k \rangle \sinh \delta
       \ge \cosh \delta + (\eta - 1) \sinh \delta
       = e^{-\delta} + \eta \sinh \delta
       \ge \eta \cosh \delta \,.
  \]
  For $k_0$ we have that
  \[ \cosh(\delta) - \langle \phi, \phi_{k_0} \rangle \sinh(\delta)
            \ge e^{-\delta} \ge \frac{1}{2\cosh(\delta)} \,.
  \]
  This gives that
  \[ \frac{R(F_0, z)}{R(F_0, j)}
       \ge \frac{\eta^{n-1}}{2} (\cosh \delta)^{n-2} \,.
  \]
  So the lower bound holds with $\eps(F_0) = \eta^{n-1}/2$.
\end{proof}

It is fairly clear that the value given for~$\eps(F_0)$ in the proof is
unlikely to be optimal. Writing $\rho = \tanh \delta$, the optimal value
we can take is
\begin{equation} \label{E:eps}
  \eps(F_0) = \inf_{\phi \in \Sph^2, 0 \le \rho < 1}
                 \frac{\prod_{k=1}^n (1 + \langle \phi, \phi_k \rangle \rho)}{1 - \rho^2} \,.
\end{equation}
When $n = 3$, the only way for $\phi_1 + \phi_2 + \phi_3$
to vanish is that the $\phi_k$ are the vertices of an equilateral triangle.
Without loss of generality, we can take them to be
$1, \omega, \omega^2 \in \Sph^1 = \Sph^2 \cap (\C \times \{0\})$, where $\omega = e(1/3)$.
If the projection of~$\phi$ to the complex plane is $\lambda e(\varphi)$,
with $0 \le \lambda \le 1$, then the expression under the infimum works out as
\[ \frac{1 - \tfrac{3}{4} \lambda^2 \rho^2 + \tfrac{1}{4} \cos (6\pi\varphi) \lambda^3 \rho^3}%
        {1 - \rho^2} \,.
\]
It is clear that, for fixed~$\rho$, the expression will be minimal when $\cos (6\pi\varphi) = -1$
and $\lambda = 1$. The expression then simplifies to
\[ \frac{(1 + \tfrac{1}{2}\rho)^2}{1 + \rho} = 1 + \frac{\rho^2}{4(1+\rho)} \,, \]
which is minimal for $\rho = 0$. This gives the following.

\begin{lemma} \label{L:n=3}
  If $F_0 \in \C[x,y]'_3$ satisfies $z(F_0) = j$, then we can take $\eps(F_0) = 1$.
\end{lemma}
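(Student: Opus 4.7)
The plan is to verify that the infimum in~(\ref{E:eps}), which gives the optimal value of~$\eps(F_0)$, is at least~$1$ when $n=3$. All the geometric and algebraic reductions needed have essentially been set up in the paragraph preceding the lemma, so the proof mainly consists of certifying the final inequality and explaining why the reductions are legitimate.

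First I would observe that the condition $\phi_1 + \phi_2 + \phi_3 = 0$ for three points on~$\Sph^2$ forces the $\phi_k$ to lie on a great circle and to be the vertices of an equilateral triangle inscribed in it. Since both sides of the inequality in Proposition~\ref{P:eps} are invariant under the $\SO(3)$-action (which corresponds to replacing $F_0$ by $F_0 \cdot \gamma$ for $\gamma \in \SU(2)$, as used in the proof of Lemma~\ref{L:Rquot}), I can rotate so that the great circle is the equator of~$\Sph^2$ and $\phi_k = \omega^{k-1} \in \Sph^1 \subset \Sph^2$ for $k=1,2,3$, where $\omega = e(1/3)$.

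Next I would parameterize $\phi \in \Sph^2$ by the length~$\lambda \in [0,1]$ of its projection to the equatorial plane and the argument $2\pi\varphi$ of that projection. Then $\langle \phi, \phi_k \rangle = \lambda \cos(2\pi(\varphi - (k-1)/3))$, and a direct expansion (together with the identity $\cos A + \cos(A + 2\pi/3) + \cos(A - 2\pi/3) = 0$ for the linear terms in~$\rho$ and a similar identity for the quadratic terms) yields
\[ \prod_{k=1}^3 (1 + \langle \phi, \phi_k \rangle \rho)
     = 1 - \tfrac{3}{4} \lambda^2 \rho^2 + \tfrac{1}{4} \cos(6\pi\varphi)\, \lambda^3 \rho^3 \,,
\]
exactly as stated in the text.

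For fixed $\rho \in [0,1)$, this is minimized over $(\lambda,\varphi)$ by taking $\cos(6\pi\varphi) = -1$ and $\lambda = 1$, since the $\lambda^2$-coefficient is negative and the cubic term is then made as negative as possible. The numerator becomes $1 - \tfrac{3}{4}\rho^2 - \tfrac{1}{4}\rho^3 = (1 + \tfrac{1}{2}\rho)^2 (1 - \rho)$, so the quotient in~(\ref{E:eps}) reduces to
\[ \frac{(1 + \tfrac{1}{2}\rho)^2}{1 + \rho} = 1 + \frac{\rho^2}{4(1+\rho)} \ge 1 \,, \]
with equality at $\rho = 0$. The main (and only non-routine) step is recognizing and verifying the factorization $1 - \tfrac{3}{4}\rho^2 - \tfrac{1}{4}\rho^3 = (1+\tfrac{1}{2}\rho)^2(1-\rho)$; everything else is either trivial algebra or has already been carried out in the surrounding discussion. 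This establishes that $\eps(F_0) = 1$ is admissible in Proposition~\ref{P:eps}.
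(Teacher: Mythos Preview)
Your proof is correct and follows essentially the same route as the paper: reduce to the equilateral triangle on the equator via $\SO(3)$-invariance, compute the product in terms of $(\lambda,\varphi,\rho)$, minimize over $(\lambda,\varphi)$ to obtain $(1+\tfrac{1}{2}\rho)^2/(1+\rho) \ge 1$. You have simply spelled out a few intermediate steps (the trigonometric identities and the cubic factorization) more explicitly than the paper does.
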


If we would use the value $\eta^2/2$ from the proof of Proposition~\ref{P:eps}, then we
would obtain $\eps(F_0) = \tfrac{1}{8}$ instead. In general, we can at least
in principle compute (an approximation to) the optimal~$\eps(F_0)$ by solving
the optimization problem in~\eqref{E:eps}. We remark here that when working
over~$\R$ instead of~$\C$, we can restrict $\phi$ to run over
the circle $\Sph^2 \cap (\R \times \R)$ (inside $\R^3 = \C \times \R$).
This simplifies the computation and can also result in a better bound.
A further improvement (which can also be used to extend the applicability
of the resulting algorithm from squarefree forms to stable forms) is based
on the following result.

\begin{lemma} \label{L:epsdelta}
  Let $F_0 \in \C[x,y]'_n$ be stable with $z(F_0) = j$. Then
  \[ \eps_{F_0} \colon \R_{\ge 0} \To \R_{\ge 1}\,, \qquad
                       \delta \longmapsto \min_{z \colon \dist(z,j) = \delta} \frac{R(F_0,z)}{R(F_0,j)}
  \]
  is a strictly monotonically increasing bijection.
\end{lemma}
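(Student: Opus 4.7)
The plan is to combine Lemma~\ref{L:Rquot}, the geometric characterization $\sum_k \phi_k = 0$ of $z(F_0) = j$, and the stability hypothesis to obtain a monotonicity statement already at the level of a fixed direction~$\phi$, from which the lemma will follow.

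Concretely, by Lemma~\ref{L:Rquot}, for $z \in \HH_3$ at hyperbolic distance~$\delta$ from~$j$ with unit tangent direction $\phi \in \Sph^2$ and $c_k = \langle \phi, \phi_k\rangle$,
\[ f(\delta,\phi) := \frac{R(F_0,z)}{R(F_0,j)} = \prod_{k=1}^n(\cosh\delta + c_k\sinh\delta), \]
so $\eps_{F_0}(\delta) = \min_{\phi\in\Sph^2} f(\delta,\phi)$. Continuity of $\eps_{F_0}$ on $[0,\infty)$ follows from joint continuity of $f$ and compactness of $\Sph^2$, and $\eps_{F_0}(0) = 1$ is immediate. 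It remains to establish strict monotonicity and divergence to $\infty$.

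For monotonicity, I will show the stronger assertion that for \emph{every} fixed~$\phi$ the function $\delta \mapsto f(\delta,\phi)$ is strictly increasing on $[0,\infty)$. The substitution $\tau = \tanh\delta$ turns the logarithmic derivative into
\[ g(\tau) := \frac{d}{d\delta}\log f(\delta,\phi) = \sum_{k=1}^n \frac{\tau + c_k}{1 + c_k\tau}, \]
with $g(0) = \sum_k c_k = \bigl\langle \phi, \sum_k \phi_k\bigr\rangle = 0$ thanks to $z(F_0) = j$. A further differentiation yields $g'(\tau) = \sum_k (1-c_k^2)/(1+c_k\tau)^2$, which is non-negative and strictly positive unless every $\phi_k \in \{\phi,-\phi\}$. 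Stability precludes the latter: if all~$\phi_k$ lay on $\{\phi,-\phi\}$ with multiplicities $m_+, m_-$ summing to~$n$, one of these would be $\ge n/2$, contradicting the stability of~$F_0$. Hence $g(\tau) > 0$ for $\tau > 0$, so $f(\cdot,\phi)$ is strictly increasing. The desired strict monotonicity of $\eps_{F_0}$ follows: for $0 \le \delta_1 < \delta_2$ and $\phi^*$ a minimizer of $f(\delta_2,\cdot)$,
\[ \eps_{F_0}(\delta_1) \le f(\delta_1,\phi^*) < f(\delta_2,\phi^*) = \eps_{F_0}(\delta_2). \]

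For divergence, I will adapt the bookkeeping from the proof of Proposition~\ref{P:eps} to the stable (not necessarily squarefree) setting by grouping equal roots. Let $\tilde\phi_1,\dots,\tilde\phi_r$ be the distinct roots with multiplicities $m_1,\dots,m_r$, each $<n/2$ by stability, and set $\eta = 1 - \max_{i\ne i'}\sqrt{(\langle \tilde\phi_i,\tilde\phi_{i'}\rangle+1)/2} > 0$. Exactly as in that proof, for any $\phi \in \Sph^2$ at most one $\tilde\phi_{i_0}$ can satisfy $\langle \phi,\tilde\phi_{i_0}\rangle < \eta-1$. Using the elementary inequalities $\cosh\delta + c\sinh\delta \ge e^{-\delta}$ for $c \ge -1$ and $\cosh\delta + c\sinh\delta \ge (\eta/2)e^\delta$ for $c \ge \eta-1$, one obtains
\[ f(\delta,\phi) \ge (\eta/2)^{n-m_{i_0}}\, e^{(n-2m_{i_0})\delta} \ge (\eta/2)^n\, e^{(n-2m^*)\delta}, \]
with $m^* = \max_i m_i < n/2$ (the case without an~$i_0$ is only better). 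Hence $\eps_{F_0}(\delta) \to \infty$, and combined with continuity and strict monotonicity this yields the bijection onto $\R_{\ge 1}$.

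The one delicate point, and likely the main obstacle, is verifying that both the strict positivity of $g(\tau)$ for $\tau > 0$ and the positivity of the exponent $n - 2m^*$ rely on the \emph{same} combinatorial consequence of stability, namely that no more than half of the total multiplicity can concentrate on a single point (respectively, on an antipodal pair). Everything else is routine computation.
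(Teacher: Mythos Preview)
Your proof is correct and follows essentially the same route as the paper's: both fix the direction~$\phi$, show that $\delta \mapsto f(\delta,\phi)$ is strictly increasing on $[0,\infty)$ using $\sum_k \phi_k = 0$ together with stability (to rule out all $\phi_k \in \{\phi,-\phi\}$), and then establish divergence via the same $\eta$-type estimate adapted to possibly repeated roots. The only cosmetic difference is the tool used for monotonicity---you compute the logarithmic derivative and show $g'(\tau)>0$, whereas the paper expands the product as a nonnegative combination of $e^{m\delta}$ and invokes strict convexity---but these are equivalent arguments.
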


\begin{proof}
  We know that the unique minimum of~$R(F_0,z)$ is attained when $z = z(F_0) = j$.
  We now fix $\phi$ in Lemma~\ref{L:Rquot} and consider $R(F_0,z)/R(F_0,j)$ as a function
  of~$\delta$. Note that
  \[ \cosh \delta + \langle \phi, \phi_k \rangle \sinh \delta
       = \frac{1 + \langle \phi, \phi_k \rangle}{2} e^\delta
          + \frac{1 - \langle \phi, \phi_k \rangle}{2} e^{-\delta}
  \]
  is a nonnegative linear combination of $e^\delta$ and~$e^{-\delta}$. This implies
  that $R(F_0,z)/R(F_0,j)$, which is the product of these expressions over all
  $1 \le k \le n$, is a nonnegative linear combination of
  terms~$e^{m\delta}$ (with $-n \le m \le n$), which are all convex from below.
  For $m \neq 0$, they are even strictly convex. The only possibility that
  results in a constant product is that half of the~$\phi_k$ equal~$\phi$ and
  the other half equal~$-\phi$, but this would imply that $F_0$ is not stable.
  So the quotient, considered as a function of~$\delta$ for fixed~$\phi$,
  is strictly convex from below with minimum value~$1$ at $\delta = 0$.
  In particular, $\delta \mapsto R(F_0,z)/R(F_0,j)$ is strictly increasing
  as $\delta$ grows from~$0$ to~$\infty$. It follows that $\eps_{F_0}$ is strictly increasing
  as well; also $\eps_{F_0}(0) = 1$. It remains to show that $\eps_{F_0}(\delta)$ tends
  to infinity with~$\delta$. Note that
  \[ \cosh \delta + \langle \phi, \phi_k \rangle \sinh \delta
         \ge \max \Bigl\{\frac{1 + \langle \phi, \phi_k \rangle}{2} e^\delta, e^{-\delta}\Bigr\} \,.
  \]
  Since $F_0$ is stable by assumption, the maximal multiplicity~$m$ of a root of~$F_0$ is
  strictly less than~$n/2$. Defining $\eta$ similarly as in the proof of Proposition~\ref{P:eps},
  but restricting to pairs $k, k'$ with $\phi_k \neq \phi_{k'}$, we deduce that
  \[ \frac{R(F_0,z)}{R(F_0,j)} \ge \eta^{n-m} e^{(n-2m) \delta} \ge \eta^{n-m} e^\delta \,, \]
  which finishes the proof.
\end{proof}

\begin{definition}
  Let $F \in \C[x,y]_n$ be stable. Let $F_0$ be a form in the
  $\SL(2, \C)$-orbit of~$F$ satisfying $z(F_0) = j$. Then we define
  $\eps_F(\delta) = \eps_{F_0}(\delta)$ with $\eps_{F_0}$ as in Lemma~\ref{L:epsdelta}.
  If $F$ is squarefree, then we define
  $\eps(F) = \eps(F_0)$ with $\eps(F_0)$ as in~\eqref{E:eps}.
\end{definition}

We remark that $\eps(F_0 \cdot \gamma) = \eps(F_0)$ for $\gamma \in \SU(2)$
(such $\gamma$ induce rotations of~$\Sph^2$ and so do not change the
geometry of the situation), which implies that $\eps_F$ (or $\eps(F)$) does not depend
on the choice of~$F_0$. When working over~$\R$, we take $F_0$ in the
$\SL(2, \R)$-orbit of~$F$; the previous remark then applies with $\SO(2)$ in
place of~$\SU(2)$.

We now combine the results obtained so far.

\begin{theorem} \label{T:bound}
  Let $F \in \C[x,y]_n$ be stable; we write $\delta = \dist(z(F), j)$.
  Then
  \[ 2^{1-n} \eps_F(\delta)
      \le \frac{\|F\|}{\theta(F)}
      \le 2^{-n} \binom{2n}{n} (\cosh \delta)^n \,.
  \]
  For $\gamma \in \SL(2,\C)$ write $\gamma^{-1} \cdot z(F) = t + uj$.
  If
  \[ \frac{|t|^2 + u^2 + 1}{2u}
       > \cosh \eps_F^{-1}\Bigl(2^{n-1} \frac{\|F\|}{\theta(F)}\Bigr) \,,
  \]
  then $\|F \cdot \gamma\| > \|F\|$.

  If $F$ is squarefree, then we can replace the condition on~$\gamma$ by
  \[ \frac{|t|^2 + u^2 + 1}{2u}
       > 2 \left(\frac{2 \|F\|}{\eps(F) \theta(F)}\right)^{1/(n-2)} \,.
  \]
\end{theorem}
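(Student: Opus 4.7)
The plan is to combine Proposition~\ref{P:bound1} (comparing $\|F\|$ with $R(F,j)$) with the $R$-quotient bounds from Proposition~\ref{P:eps} and Lemma~\ref{L:epsdelta}, transporting everything to a form $F_0$ in the $\SL(2,\C)$-orbit of~$F$ with $z(F_0)=j$. Choose $\gamma_0 \in \SL(2,\C)$ with $\gamma_0 \cdot z(F) = j$ and set $F_0 = F \cdot \gamma_0^{-1}$. Since distance and~$R$ are both $\SL(2,\C)$-invariant, $\dist(\gamma_0 \cdot j, j) = \dist(j, \gamma_0^{-1} \cdot j) = \delta$, $\theta(F) = R(F, z(F)) = R(F_0, j)$, and $R(F, j) = R(F_0, \gamma_0 \cdot j)$. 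Dividing the inequalities of Proposition~\ref{P:bound1} by $\theta(F)$ thus reduces the first assertion to bounding the ratio $R(F_0, \gamma_0 \cdot j)/R(F_0, j)$: the upper bound $(\cosh \delta)^n$ is exactly the right half of Proposition~\ref{P:eps}, while the lower bound $\eps_{F_0}(\delta) = \eps_F(\delta)$ is the definition from Lemma~\ref{L:epsdelta}. The identification $\eps_F = \eps_{F_0}$ is well-posed because any two choices of~$F_0$ differ by an element of the stabilizer~$\SU(2)$ of~$j$, under which $\eps_{F_0}$ is invariant (the remark following Lemma~\ref{L:epsdelta}).

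For the sufficient condition on $\gamma$, I would apply the just-established lower bound to the form $F \cdot \gamma$, whose covariant is $z(F \cdot \gamma) = \gamma^{-1} \cdot z(F) = t + uj$ by covariance of~$z$. Writing $\delta' = \dist(t+uj, j)$, formula~\eqref{E:coshdelta} gives $\cosh \delta' = (|t|^2 + u^2 + 1)/(2u)$. Since $\theta(F \cdot \gamma) = \theta(F)$ and $\eps_{F \cdot \gamma} = \eps_F$ (both by the same orbit-transport argument as above), the lower bound reads
\[ \|F \cdot \gamma\| \ge 2^{1-n}\, \eps_F(\delta')\, \theta(F), \]
so $\|F \cdot \gamma\| > \|F\|$ whenever $\eps_F(\delta') > 2^{n-1} \|F\|/\theta(F)$. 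By Lemma~\ref{L:epsdelta} the function $\eps_F$ is a strictly increasing bijection $\R_{\ge 0} \to \R_{\ge 1}$, and the argument lies in~$\R_{\ge 1}$ thanks to the main lower bound specialised at $\delta = 0$ (which gives $\|F\|/\theta(F) \ge 2^{1-n}$), so this condition is equivalent to $\delta' > \eps_F^{-1}(2^{n-1}\|F\|/\theta(F))$. Applying $\cosh$, which is increasing on~$\R_{\ge 0}$, yields the displayed condition.

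For the squarefree refinement, Proposition~\ref{P:eps} supplies the explicit lower bound $\eps_F(\delta') \ge \eps(F) (\cosh \delta')^{n-2}$, so it is enough to enforce $\eps(F)(\cosh \delta')^{n-2} > 2^{n-1}\|F\|/\theta(F)$; solving for $\cosh \delta'$ and using the identity $2^{(n-1)/(n-2)} = 2 \cdot 2^{1/(n-2)}$ to pull a factor~$2$ outside the exponent produces the stated bound $\cosh \delta' > 2(2\|F\|/(\eps(F)\theta(F)))^{1/(n-2)}$. The only genuinely non-routine input in the whole argument is the identification $\eps_{F \cdot \gamma} = \eps_F$, together with the $\SL(2,\C)$-invariance of~$\theta$; the remainder is careful bookkeeping with the covariance of~$z$ and the invariance of~$R$.
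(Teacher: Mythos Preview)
Your proof is correct and follows essentially the same route as the paper: transport to $F_0$ with $z(F_0)=j$, factor $\|F\|/\theta(F)$ as $\bigl(\|F\|/R(F,j)\bigr)\cdot\bigl(R(F_0,\gamma_0\cdot j)/R(F_0,j)\bigr)$, and apply Proposition~\ref{P:bound1} to the first factor and Proposition~\ref{P:eps}/Lemma~\ref{L:epsdelta} to the second; then feed the lower bound back into $F\cdot\gamma$ for the second and third statements. Your extra remark that $2^{n-1}\|F\|/\theta(F)\ge 1$ (so $\eps_F^{-1}$ is defined there) is a nice detail the paper leaves implicit, though the justification is really that $\eps_F(\delta)\ge\eps_F(0)=1$ rather than a ``specialisation at $\delta=0$''.
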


\begin{proof}
  Recall that $\theta(F) = R(F, z(F))$. Choose $\gamma_0 \in \SL(2,\C)$ such that
  $\gamma_0 \cdot z(F) = j$; then $F_0 = F \cdot \gamma_0^{-1}$ satisfies
  $z(F_0) = j$ and $\eps(F) = \eps(F_0)$. By the invariance of~$R$,
  \[ \frac{\|F\|}{\theta(F)}
       = \frac{\|F\|}{R(F, j)} \cdot \frac{R(F, j)}{R(F, z(F))}
       = \frac{\|F\|}{R(F, j)} \cdot \frac{R(F_0, \gamma_0 \cdot j)}{R(F_0, j)} \,.
  \]
  Combining the definition of~$\eps_F$ with the bounds from
  Propositions \ref{P:bound1} and~\ref{P:eps}
  and using that $\dist(\gamma_0 \cdot j, j) = \delta$, we get the bounds
  on $\|F\|/\theta(F)$.
  For the second statement, we apply the lower bound for $F \cdot \gamma$
  in place of~$F$. Since $\theta(F \cdot \gamma) = \theta(F)$
  and $\eps_{F \cdot \gamma} = \eps_F$, this results in
  \[ \|F \cdot \gamma\|
        \ge 2^{1-n} \eps_F \bigl(\dist(\gamma^{-1} \cdot z(F), j)\bigr) \theta(F) \,.
  \]
  By~\eqref{E:coshdelta}, $\cosh \dist(\gamma^{-1} \cdot z(F), j) = (|t|^2 + u^2 + 1)/(2u)$,
  so the stated condition is equivalent to the right hand side being $> \|F\|$.

  For squarefree~$F$, we use the estimate
  \[ \eps_F(\delta) \ge \eps(F) (\cosh \delta)^{n-2} \,, \]
  which implies that any $\gamma$ satisfying the last condition also satisfies
  the previous one.
\end{proof}

The last part of the proof shows that using $\eps_F$ will in general
result in better bounds than using~$\eps(F)$. On the other hand, inverting $\eps_F$
may be algorithmically more involved.

Note that for the second statement, we only need the lower bound,
based on the lower bound in Proposition~\ref{P:bound1}, which is sharp for
some~$F$.

Now consider a squarefree (or stable) $F \in \R[x,y]_n$.
Then Theorem~\ref{T:bound} gives us a finite subset of the $\SL(2,\Z)$-orbit
of~$F$ that is guaranteed to contain the representative of minimal size.
We will turn this into an algorithm in the next section.

\begin{remark}
  If we want to replace~$\Q$ by a number field~$K$ with ring of integers~$\calO_K$,
  then we would have to
  work with the (diagonal) action of~$\SL(2, \calO_K)$ on a product
  of upper half planes and spaces (one upper half plane for each real
  embedding of~$K$ and one upper half space for each pair of complex
  embeddings). For each embedding, we get a covariant point of $F \in K[x,y]'_n$
  in the corresponding half plane or space. By taking products,
  we can extend the definitions of~$\|F\|$ and~$\theta(F)$ to this
  situation, and we should be able to prove a version of Theorem~\ref{T:bound}
  that applies to it. The major problem, however, will be the enumeration
  of the points in the $\SL(2, \calO_K)$-orbit of $z(F)$ (which is now
  the tuple of covariant points associated to~$F$) that have bounded distance
  from~$(j,j,\ldots,j)$. To our knowledge, there are no good general
  algorithms for this so far. In some special cases (like imaginary quadratic
  fields of class number~$1$), an approach similar to that described in
  Section~\ref{S:algo} should be workable, however.
\end{remark}

\begin{remark} \label{R:gen}
  With applications beyond binary forms in mind, we note that we can generalize
  Theorem~\ref{T:bound} to the following situation. Assume we consider
  some kind of objects~$\Phi$ associated to~$\PP^1$ and that we have
  a $\PGL(2)$-equivariant way of associating to~$\Phi$ a binary form~$F$
  (up to scaling). The object~$\Phi$ will be given in terms of coordinates of
  points or coefficients of binary forms, so there is actually an action of~$\GL(2)$;
  we assume that the map $\Phi \mapsto F$ is in fact $\GL(2)$-equivariant.
  Let $s(\Phi)$ denote some measure of the ``size'' or ``height'' of~$\Phi$;
  we need the further assumption that there are constants $C, k > 0$ such
  that $\|F\| \le C s(\Phi)^k$. Since the coefficients of~$F$ will usually be given
  as homogeneous polynomials in the coordinates or coefficients describing
  our ``model'' of~$\Phi$, such a bound will be easily established.
  Then, for $\Phi$ defined over~$\C$,
  we can deduce in the same way as in the proof of Theorem~\ref{T:bound}
  that when $\gamma^{-1} \cdot z(F) = t + uj$ and
  \[  \frac{|t|^2 + u^2 + 1}{2u}
       > \cosh \eps_F^{-1} \Bigl(2^{n-1} \frac{C s(\Phi)^k}{\theta(F)}\Bigr) \,,
  \]
  we have that $s(\Phi \cdot \gamma) > s(\Phi)$. We will apply this
  to endomorphisms of~$\PP^1$ in Section~\ref{sect_ds}.

  Another application is when we want to use the maximum norm~$H_\infty(F)$ instead
  of~$\|F\|$ to measure how large $F$ is. We have that
  $\|F\| \le (n+1) H_\infty(F)^2$, which leads to the condition
  \[  \frac{|t|^2 + u^2 + 1}{2u}
       > \cosh \eps_F^{-1} \Bigl(2^{n-1} \frac{(n+1) H_\infty(F)^2}{\theta(F)}\Bigr)
  \]
  that guarantees that $H_\infty(F \cdot \gamma) > H_\infty(F)$.
\end{remark}


\section{The algorithm} \label{S:algo}

The basic outline of an algorithm implementing Theorem~\ref{T:bound}
is clear. We assume that $F(x,y) \in \R[x,y]_n$ is stable (so in particular, $n \ge 3$).
In practice, we may have to assume that $F$ is squarefree, since some
implementations require this to be able to compute $z(F)$ and~$\theta(F)$.

\begin{algo} \label{Algo:main} \strut\vspace{-1ex} 
  \begin{enumerate}[1.]
    \item[] \textbf{Input:} A stable binary form $F \in \R[x,y]_n$.
    \item[] \textbf{Output:} $\gamma \in \SL_2(\Z)$
            and a binary form $F' = F \cdot \gamma$ that is a smallest representative for $F$.
    \item Compute $z(F)$ and $\theta(F)$.
    \item Determine $\gamma_0 \in \SL(2,\Z)$ such that $\gamma_0^{-1} \cdot z(F) \in \calF$. \\
          Replace $z(F)$ by $\gamma_0^{-1} \cdot z(F)$ and $F$ by $F \cdot \gamma_0$.
    \item Compute an upper bound $c$ for $\cosh \eps_F^{-1}(2^{n-1} \|F\|/\theta(F))$.
    \item \label{I:enumgamma}
          Let $\Gamma$ be the set of all $\gamma \in \SL(2,\Z)$ such that
          $\cosh \dist(\gamma^{-1} \cdot z(F), j) \le c$.
    \item Determine $\gamma \in \Gamma$ with $\|F \cdot \gamma\|$ minimal
          and return $\gamma_0 \gamma$ and $F \cdot \gamma$.
  \end{enumerate}
\end{algo}

It is explained in~\cite{Cremona2} how one can compute~$z(F)$ and~$\theta(F)$;
implementations are available in Magma~\cite{Magma} and Sage~\cite{sage}.
Let $z(F) = t_0 + u_0 j$ (with $t_0 \in \R$, since $F$ has real coefficients).
To compute~$c$, we construct a suitable~$F_0$, for example,
by first replacing $F$ by $F_1(x, y) = F(x + t_0 y, y)$ and then setting
$F_0(x, y) = F_1(u_0^{1/2} x, u_0^{-1/2} y)$ (or $F_1(x, y/u_0)$; scaling does not
affect~$\eps_{F_0}$). From the roots of~$F_0$ we can deduce the collection
of $\phi_k \in \Sph^2$. Then we can use Lemma~\ref{L:Rquot} to evaluate~$\eps_{F_0}(\delta)$
for a collection of values~$\delta$; the monotonicity of~$\cosh \circ \eps_{F_0}$ then
gives us a suitable~$c$. Alternatively, when $F$ is squarefree,
we can use~\eqref{E:eps} to find
a lower bound~$\eps$ for~$\eps(F) = \eps(F_0)$ and set
\[ c = 2 \Bigl(\frac{2 \|F\|}{\eps \theta(F)}\Bigr)^{1/(n-2)} \,. \]%
(If $n = 3$, we can simply set $c = 4 \|F\|/\theta(F)$; see Lemma~\ref{L:n=3}.)
As already mentioned, using $\eps_F$ will give better bounds
and therefore lead to a smaller search space than using~$\eps(F)$, but
inverting~$\eps_F$ may require some additional work.

The least straight-forward step is step~\ref{I:enumgamma} above,
which comes down to enumerating all points in the $\SL(2,\Z)$-orbit
of~$z(F)$ whose hyperbolic distance from~$j$ does not exceed
$\operatorname{arcosh} c$. One possibility to deal with this is to
think backwards: given a point~$z$ in the orbit of~$z(F)$, where we assume that
$z(F) \in \calF$, we get
from~$z$ to~$z(F)$ by the usual ``shift-and-invert'' procedure
(add $m \in \Z$ to~$z$ so that $|\Re(z+m)| \le \tfrac{1}{2}$;
if $|z+m| < 1$, replace by $-1/(z+m)$ and repeat), where the shifts
decrease the distance from~$j$ and the inversions do not change it.
So we get all points in the orbit with bounded distance from~$j$
by constructing a rooted tree with nodes labeled by $z \in \HH$ and edges
labeled by $S$ (inversion), $T$ (shift by~$+1$) or $T^{-1}$
(shift by~$-1$), as follows.

\begin{enumerate}[1.]
  \item The root is $z(F)$. It has three children:
        $-1/z(F)$ (edge labeled~$S$), $z(F) + 1$ (edge labeled~$T$)
        and $z(F) - 1$ (edge labeled~$T^{-1}$).
  \item Let $z = t + uj$ be a non-root node. Let $\ell$ be the label
        of the edge connecting the node to its parent.
        If $(t^2 + u^2 + 1)/(2u) > c$, then remove this node. Otherwise:
        \begin{enumerate}[a.]
          \item If $\ell \neq S$ and $(|t|-1)^2 + u^2 \ge 1$,
                add a child $-1/z$ (edge labeled~$S$).
          \item If $\ell \neq T$, add a child $z-1$ (edge labeled~$T^{-1}$).
          \item If $\ell \neq T^{-1}$, add a child $z+1$ (edge labeled~$T$).
        \end{enumerate}
\end{enumerate}

The condition $(|t| - 1)^2 + u^2 \ge 1$ is equivalent to
$|\Re(-1/z)| \le \tfrac{1}{2}$; this means that $z$ can have resulted
from an inversion step in the shift-and-invert procedure.
If $z(F)$ is close to $\pm \frac{1}{2} + \frac{\sqrt{3}}{2} j$, then
using the condition as stated can lead to cycling in the algorithm.
To avoid this, we can use any representative point~$r$ in the translate of~$\calF$
under~$\gamma$, for example the points in the orbit of $2j$, to test the
condition. We then store $-1/r$, $r-1$ or~$r+1$ in addition to $-1/z$, $z-1$
or~$z+1$ in the child node.

Since we can update the bound~$c$ when we have found a new temporary
minimum of~$\|F \cdot \gamma\|$, the most efficient way to perform the
tree search is to order the nodes by their distance from~$j$ and
always expand the node closest to~$j$ that has not yet been dealt with.

Of course, we also keep track of $F \cdot \gamma$: a shift by~$\pm 1$
in~$z$ corresponds to substituting $(x \mp y,y)$ for~$(x,y)$,
and an inversion corresponds to substituting $(y, -x)$ for~$(x,y)$.

\begin{figure}[htb]
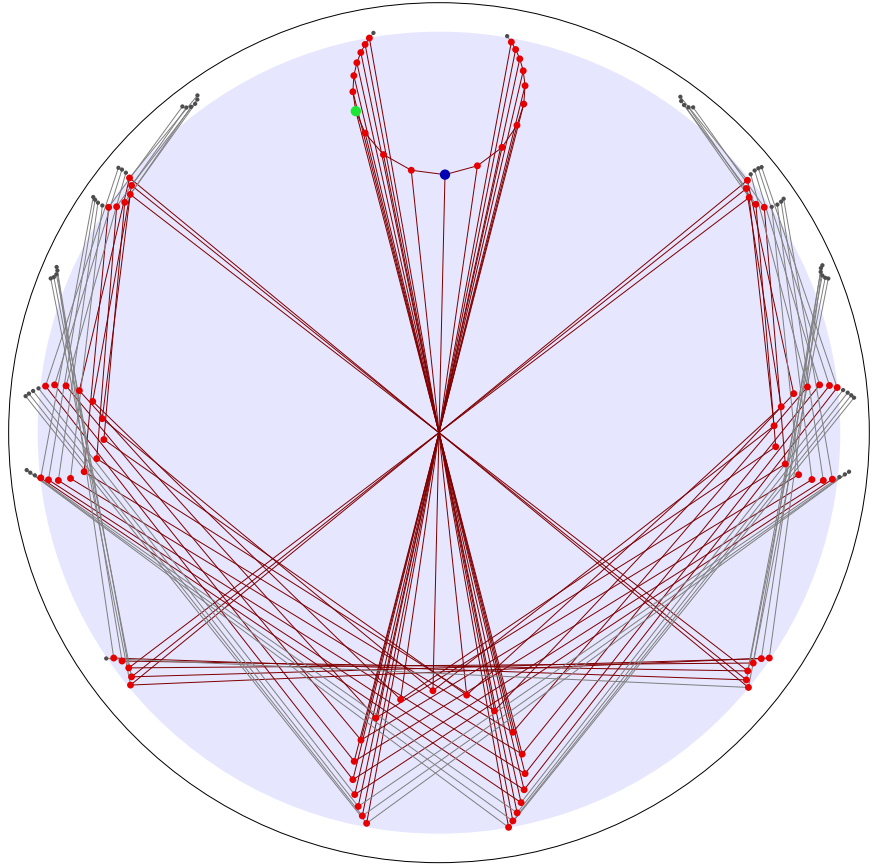

  \begin{center}
    \Gr{tree}{0.7\textwidth} \\[10pt]
    \caption{The tree that is traversed during the computation of the
             representative of minimal size in Example~\ref{Ex1}, in the
             Poincar\'e disk model. The light blue disk bounds the search
             region. The larger blue dot represents~$z(F)$ and the larger
             green dot is where the minimum is attained. The red dots
             mark the nodes that are expanded; the gray dots are nodes
             that are discarded.}
    \label{Fig1}
  \end{center}
\end{figure}

\begin{example} \label{Ex1}
  Consider the cubic form
  \[ F(x,y) = -2 x^3 + 2 x^2 y + 3 x y^2 + 127 y^3 \]
  with $\|F\| = 16\,146$. This form is reduced in the sense of~\cite{Cremona2}, since
  \[ z(F) \approx 0.17501 + 3.99543 j \in \calF \,. \]
  We then need to find the $\gamma \in \SL(2,\Z)$ with $\gamma^{-1} \cdot z(F) = t + uj$ satisfying
  \[ \frac{t^2 + u^2 + 1}{2u} \le c
        = \cosh \eps_F^{-1} \Bigl(\frac{4 \|F\|}{\theta(F)}\Bigr)
        \approx 28.0049 \,.
  \]
  For comparison, if we use $\eps(F) = 1$ instead, the initial bound we
  obtain is $c\approx 31.5022$.
  We use the tree search explained above. In the course of the search, the bound
  gets reduced to~$\approx 14.3415$; there are then $88$~points
  (marked red in Figure~\ref{Fig1}) in the $\SL(2,\Z)$-orbit of~$z(F)$ that have to be considered.
  A representative of smallest size is obtained for $\gamma = \smm{1}{4}{0}{1}$ with
  \[ \|F \cdot \gamma\| = \|F(x+4y,y)\|
        = \|{-2} x^3 - 22 x^2 y - 77 x y^2 + 43 y^3\| = 8\,266 \,.
  \]
  If instead we minimize the height, then the bound~$c$ is $\approx 111.891$,
  which gets reduced to~$\approx 23.3403$ during the search.
  There are $140$~points in the search region, and a smallest representative is
  \[ F(4x-5y, x-y) = 43 x^3 - 52 x^2 y - 47 x y^2 + 58 y^3 \qquad\text{of height~$58$} \]
  (compared to $H_\infty(F) = 127$). The covariant point of this form
  is $\approx 1.12502 + 0.13060 j$; the $\cosh$ of its distance to~$j$
  is $\approx 8.73973$.
\end{example}


\section{Application to dynamical systems} \label{sect_ds}

A \emph{dynamical system on~$\PP^1$} is a non-constant
endomorphism $f \colon \PP^1 \to \PP^1$. As described in Section~\ref{S:endo},
$f$ can be specified by a model $[F : G]$, where $F$ and~$G$ are binary
forms of the same degree~$d$ and without common factors. If $f$ is defined over~$\Q$,
we can choose $F, G \in \Z[x,y]_d$ with coprime coefficients.
We have a natural right action of~$\SL(2, \Z)$ on endomorphisms of degree~$d$
by conjugation, which is given explicitly for
$\gamma = \smm{a}{b}{c}{d} \in \SL(2,\Z)$ by
\[ f^\gamma = [F^\gamma : G^\gamma]\,, \]
where
\begin{align*}
  F^\gamma(x,y) &= d F(ax+by, cx+dy) - b G(ax+by, cx+dy) \qquad\text{and} \\
  G^\gamma(x,y) &= -c F(ax+by, cx+dy) + a G(ax+by, cx+dy) \,.
\end{align*}
This action can be extended to $\Mat(2,\Z) \cap \GL(2,\Q)$.
Note that this amounts to using the
adjoint~$\smm{d}{-b}{-c}{a}$ in place of the inverse
compared to the action by conjugation. Since scaling
both forms by a common factor does not change the endomorphism they
represent, this still induces the usual action of~$\PGL(2,\Q)$
on endomorphisms by conjugation.

To apply the reduction algorithm of binary forms to dynamical systems,
we follow the framework of Remark~\ref{R:gen}.
We first associate to each dynamical system a binary form in a covariant way.
Let $f \colon \PP^1 \to \PP^1$ be a dynamical system defined over~$\Q$.
Choose a model $f = [F : G]$ with two homogeneous polynomials $F$ and~$G$
with coprime integral coefficients.
We write the $m$th iterate of~$f$ as $f^m = [F_m : G_m]$. Define
\[ \Phi_m(f) = y F_m - x G_m \qquad\text{and}\qquad
   \Phi_m^{\ast}(f) = \prod_{k \mid m} (y F_k - x G_k)^{\mu(m/k)} \,,
\]
where $\mu$ is the M\"obius function. The zeros of the form~$\Phi_m(f)$
are the points of period~$m$ for~$f$. The form $\Phi^{\ast}_m(f)$
is called the \emph{dynatomic polynomial} and its zeros, in most cases,
are the points of minimal period~$m$ for~$f$.
See~\cite{Silverman10}*{Section~4.1}
for properties of dynatomic polynomials in dimension~$1$.
It is easy to check that $\Phi_m(f^\gamma) = \Phi_m(f) \cdot \gamma$
and similarly for~$\Phi^*_m$.

We can bound the size of~$\Phi_m(f)$ and~$\Phi^{\ast}_m(f)$ in terms of
the height of~$f = [F : G]$.

\begin{proposition}\label{prop4}
  Given $d \ge 2$ and $m \ge 1$,
  there exist positive constants $C_{d,m}, C_{d,m}^{\ast}$ and
  $k_{d,m}, k_{d,m}^{\ast}$ such that for every morphism
  $f \colon \PP^1 \to \PP^1$ of degree $d \geq 2$ as above,
  \[ \|\Phi_m(f)\| \leq C_{d,m} H(f)^{k_{d,m}} \qquad\text{and}\qquad
     \|\Phi_m^{\ast}(f)\| \leq C_{d,m}^{\ast} H(f)^{k^{\ast}_{d,m}}\,.
  \]
\end{proposition}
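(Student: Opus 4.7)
The plan is to bound the iterates $f^m = [F_m : G_m]$ polynomially in $H(f)$ and then read off both estimates. The recursion $F_{m+1} = F(F_m, G_m)$, $G_{m+1} = G(F_m, G_m)$ expresses each coefficient of $F_{m+1}$ or $G_{m+1}$ as an integer linear combination of products of a single coefficient of $F$ or $G$ with $d$ coefficients of $F_m, G_m$. Counting monomials of degree $d^{m+1}$ in two variables yields a recursion of the shape
\[ \max(\|F_{m+1}\|_\infty, \|G_{m+1}\|_\infty) \le A(d,m)\, H(f)\, \max(\|F_m\|_\infty, \|G_m\|_\infty)^d \]
for some polynomial $A(d,m)$. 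Iterating from $\max(\|F_1\|_\infty, \|G_1\|_\infty) \le H(f)$ gives
\[ \max(\|F_m\|_\infty, \|G_m\|_\infty) \le B(d,m)\, H(f)^{(d^m-1)/(d-1)} \]
for some $B(d,m)$. Since $\Phi_m(f) = yF_m - xG_m$ is a binary form with at most $d^m + 2$ coefficients, each of which, up to sign, is a coefficient of $F_m$ or $G_m$, we have $\|\Phi_m(f)\| \le (d^m + 2)\max(\|F_m\|_\infty, \|G_m\|_\infty)^2$, which combined with the preceding estimate yields the required bound on $\|\Phi_m(f)\|$ with $k_{d,m} = 2(d^m - 1)/(d-1)$.

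For $\Phi_m^*(f)$, M\"obius inversion of the displayed definition produces the companion formula $\Phi_m(f) = \prod_{k \mid m} \Phi_k^*(f)$, so $\Phi_m^*(f)$ is a polynomial divisor of $\Phi_m(f)$ in $\Z[x,y]$ (integrality of the dynatomic polynomial is standard; see \cite{Silverman10}*{Section~4.1}). Dehomogenizing in $y$ and applying Mignotte's factorization inequality---a divisor $g \in \Z[x]$ of $P \in \Z[x]$ satisfies $\|g\|_\infty \le 2^{\deg P}\|P\|_\infty$---then rehomogenizing, we bound $\|\Phi_m^*(f)\|$ by a constant depending only on $d$ and $m$ times $\|\Phi_m(f)\|$, which yields the second inequality with $k_{d,m}^* = k_{d,m}$.

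The routine part is the bound on $\|\Phi_m(f)\|$: it amounts to careful bookkeeping of constants in the composition recursion. The main obstacle is the $\Phi_m^*(f)$ bound, for which a Mignotte/Gelfond-type factorization inequality is essential; without it one has no a priori control over the coefficient size of a polynomial divisor, even when the divisor is known to have integer coefficients.
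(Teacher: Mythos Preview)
Your argument is essentially correct, but it takes a different route from the paper, particularly for~$\Phi_m^\ast$. The paper's proof is a one-liner for both forms simultaneously: the coefficients of $\Phi_m(f)$ and of $\Phi_m^\ast(f)$ are \emph{universal} homogeneous polynomials (of some degree $k_{d,m}$, resp.\ $k_{d,m}^\ast$) in the $2d+2$ coefficients of~$f$; hence $\|\Phi_m^{(\ast)}(f)\|$ is itself a homogeneous polynomial in those coefficients, and bounding each coefficient of~$f$ by~$H(f)$ and applying the triangle inequality gives the result. For $\Phi_m^\ast$ this relies on the universal integrality of dynatomic polynomials (over $\Z[a_0,\dots,b_d]$), which is the same fact you invoke from \cite{Silverman10}. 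What the paper's approach buys is the sharp exponent: e.g.\ $k_{2,2}^\ast = 4$ rather than your $k_{2,2}^\ast = k_{2,2} = 6$. Your approach, via divisibility and Mignotte's bound, is perfectly valid for the existence statement and has the mild advantage of only needing integrality of $\Phi_m^\ast(f)$ for each individual~$f$ over~$\Z$, not the universal statement; the price is a worse exponent and an extra external ingredient.

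One small slip: the coefficients of $\Phi_m(f) = yF_m - xG_m$ are not literally ``up to sign, a coefficient of $F_m$ or $G_m$''; the interior coefficients are differences $a_{j-1}-b_j$. This costs only an extra factor of~$2$ in the max-norm bound and does not affect the argument.
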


\begin{proof}
  The coefficients of $\Phi_m(f)$ and~$\Phi^{\ast}_m(f)$ are homogeneous polynomials
  in the coefficients of~$f$. Then the sum~$s(f)$ of the squares of the coefficients of~$\Phi_m(f)$
  or~$\Phi^\ast_m(f)$ is a homogeneous polynomial of some degree $k_{d,m}$ or~$k^\ast_{d,m}$
  in the coefficients of~$f$. The bound on $\|\Phi^{(\ast)}_m(f)\| = s(f)$ is obtained by bounding
  the coefficients of~$f$ by~$H(f)$ and applying the triangle inequality.
\end{proof}

For given $d$ and~$m$, we can find suitable constants (at least in principle)
by doing the computation mentioned in the proof for a generic~$f$. This gives
\begin{align*}
  C_{d,1} &= C^\ast_{d,1} = 4d + 2\,, & k_{d,1} &= k^\ast_{d,1} = 2\,;  &
  C_{2,2} &= 322\,,                   & k_{2,2} &= 6\,; \\
  C^\ast_{2,2} &= 43\,,               & k^\ast_{2,2} &= 4\,;  &
  C^\ast_{2,3} &= 106\,459\,,         & k^\ast_{2,3} &= 12\,; \\
  C_{3,2} &= 18\,044\,,               & k_{3,2} &= 8\,;  &
  C^\ast_{3,2} &= 1604\,,             & k^\ast_{3,2} &= 6\,.
  \renewcommand\arraystretch{1.4}
\end{align*}

For most endomorphisms, using $m = 1$ is sufficient to get a squarefree
binary form of degree at least~$3$, but for some maps
we need to consider higher order periodic points.

Applying Remark~\ref{R:gen} now results in the following.

\begin{corollary} \label{C:endobound}
  Let $f = [F : G] \colon \PP^1 \to \PP^1$ be a morphism of degree~$d$, given
  by two binary forms $F, G \in \Z[x,y]_d$ with coprime coefficients.
  Pick some $\Phi = \Phi_m(f)$ or~$\Phi^\ast_m(f)$ such that $\Phi$
  is stable (in particular, $\deg \Phi \ge 3$).
  Let $C = C_{d,m}$ or~$C^\ast_{d,m}$ and $k = k_{d,m}$ or~$k^\ast_{d,m}$
  be the constants from Proposition~\ref{prop4},
  depending on the choice of~$\Phi$.

  Let $\gamma \in \SL(2, \Z)$ and write $\gamma^{-1} \cdot z(\Phi) = t + uj$. If
  \[ \frac{t^2 + u^2 + 1}{2 u}
      > \cosh \eps_\Phi^{-1} \left(2^{\deg \Phi - 1} \frac{C H(f)^k}{\theta(\Phi)}\right) \,,
  \]
  then $H(f^\gamma) > H(f)$.
\end{corollary}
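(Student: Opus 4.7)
My plan is to instantiate the general framework sketched in Remark~\ref{R:gen}, taking the ``object'' to be the endomorphism~$f$, the associated binary form to be the stable form~$\Phi$ selected in the statement, and the size function to be $s(f) = H(f)$. Proposition~\ref{prop4} supplies the comparison $\|\Phi\| \le C\, H(f)^k$ that the framework requires, while the $\SL(2,\Z)$-equivariance we need is the identity $\Phi(f^\gamma) = \Phi(f) \cdot \gamma$ noted just before Proposition~\ref{prop4}.

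The first step is to apply the lower bound from Theorem~\ref{T:bound} to the form $\Phi \cdot \gamma$. Using that $\theta$ is $\SL(2,\C)$-invariant so $\theta(\Phi \cdot \gamma) = \theta(\Phi)$, that $\eps_{\Phi \cdot \gamma} = \eps_\Phi$, and that $z(\Phi \cdot \gamma) = \gamma^{-1} \cdot z(\Phi)$, I obtain, with $n = \deg \Phi$ and $\delta = \dist(\gamma^{-1} \cdot z(\Phi), j)$,
\[
\|\Phi \cdot \gamma\| \ge 2^{1-n}\, \eps_\Phi(\delta)\, \theta(\Phi).
\]
By formula~\eqref{E:coshdelta}, $\cosh \delta = (t^2 + u^2 + 1)/(2u)$, so the hypothesis on~$\gamma$ reads $\cosh \delta > \cosh \eps_\Phi^{-1}(X)$ with $X = 2^{n-1}\, C\, H(f)^k / \theta(\Phi)$. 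Because $\eps_\Phi$ is a strictly increasing bijection $\R_{\ge 0} \to \R_{\ge 1}$ by Lemma~\ref{L:epsdelta}, and $\cosh$ is strictly increasing on $\R_{\ge 0}$, this inequality converts into $\eps_\Phi(\delta) > X$, whence
\[
\|\Phi \cdot \gamma\| > 2^{1-n} \cdot X \cdot \theta(\Phi) = C\, H(f)^k.
\]

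The second step is to compare $\|\Phi \cdot \gamma\|$ with $H(f^\gamma)$ via Proposition~\ref{prop4} applied to~$f^\gamma$, which gives $\|\Phi(f^\gamma)\| \le C\, H(f^\gamma)^k$. Combining this with the equivariance $\Phi(f^\gamma) = \Phi \cdot \gamma$ yields
\[
C\, H(f^\gamma)^k \ge \|\Phi \cdot \gamma\| > C\, H(f)^k,
\]
so $H(f^\gamma) > H(f)$, as desired.

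There is no genuinely hard step in this argument: the corollary is really a direct repackaging of Theorem~\ref{T:bound} via the input--output bound of Proposition~\ref{prop4}. The only bookkeeping point worth checking is that $(F^\gamma, G^\gamma)$ is again a coprime integral model of~$f^\gamma$, so that $H(f^\gamma)$ is honestly the maximum norm of its coefficient vector. For $\gamma \in \SL(2,\Z)$ the matrix appearing in the action formula has determinant~$1$, so any common divisor of $F^\gamma$ and~$G^\gamma$ would pull back to a common divisor of $F$ and~$G$; coprimality is preserved, and everything else is mechanical.
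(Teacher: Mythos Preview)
Your proof is correct and follows exactly the route the paper intends: the corollary is stated immediately after Proposition~\ref{prop4} with the sentence ``Applying Remark~\ref{R:gen} now results in the following,'' and your argument is precisely a spelled-out instantiation of that remark, using the lower bound of Theorem~\ref{T:bound}, the invariance of $\theta$ and~$\eps_\Phi$, the equivariance $\Phi(f^\gamma) = \Phi(f)\cdot\gamma$, and the size comparison from Proposition~\ref{prop4}. Your final paragraph checking that $(F^\gamma,G^\gamma)$ remains a coprime integral model is a useful sanity check that the paper leaves implicit.
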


This easily translates into an algorithm that produces a representative
of smallest height in a given $\SL(2,\Z)$-orbit.
We just have to use the bound from Corollary~\ref{C:endobound}
in place of~$c$ in the algorithm of Section~\ref{S:algo} and search
for the minimal~$H(f^\gamma)$ (reducing the bound whenever possible).
However, the problem remains of finding the representative of smallest height
over \emph{all} $\SL(2,\Z)$-orbits of minimal models.
Bruin and Molnar prove that for $f$ defined over~$\Q$,
there are only finitely many $\SL(2,\Z)$-orbits of minimal
models; see~\cite{Bruin3}*{Proposition~6.5}. So to obtain
a reduced model, we only have to apply the algorithm
of this paper to a representative from each $\SL(2,\Z)$-orbit
of minimal models. We will discuss in Section~\ref{S:orbits}
how we can obtain such representatives.

The following example shows how to find
the smallest height representative for an endomorphism of~$\PP^1$.

\begin{example} \label{exmp2}
  Consider the endomorphism
  \begin{align*}
      f \colon \qquad \PP^1 &\To \PP^1 \\
          (x:y) &\longmapsto (50 x^2 + 795 x y + 2120 y^2 : 265 x^2 + 106 y^2)\,.
  \end{align*}
  Since $f$ has even degree and the given model is minimal,
  we need only consider its $\SL(2,\Z)$-orbit;
  see Proposition~\ref{prop:even minimal}.

  The binary form defining its fixed points is
  \[ \Phi_1(f) = \Phi(x,y) = 265 x^3 - 50 x^2 y - 689 x y^2 - 2120 y^3 \,. \]
  The bound for the $\cosh$ of the distance~$\delta$ to~$j$ for finding the
  representative of smallest height in the orbit of~$\Phi$ is $\approx 14.2268$.
  As soon as the optimum is found, this bound is reduced to $\approx 9.6546$,
  which cuts down the number of points in the search space to~$54$.
  We find that
  \[ \Phi(x-y, y) = 265 x^3 - 845 x^2 y + 206 x y^2 - 1746 \]
  has both smallest height and smallest size in the $\SL(2,\Z)$-orbit of~$\Phi$.
  This corresponds to the endomorphism
  \[ (x : y) \longmapsto (-315 x^2 - 165 x y - 1746 y^2 : -265 x^2 + 530 x y - 371 y^2) \,. \]
  of height~$1746$. However, this is not the representative of smallest
  height among the $\SL(2,\Z)$-conjugates of~$f$: running the algorithm
  with the modifications indicated above, we get an initial bound
  of~$\approx 35.5547$ for~$\cosh \delta$, which gets reduced to~$\approx 19.7017$.
  The algorithm runs through $118$~points $\gamma^{-1} \cdot z(\Phi)$, showing
  that a representative of smallest height is obtained for the conjugate
  \[ f^\gamma
      = [480 x^2 + 1125 x y - 1578 y^2 : -265 x^2 - 1060 x y - 1166 y^2]
  \]
  of height~$1578$ for $\gamma = \smm{1}{2}{0}{1}$.
\end{example}


\section{Orbits of minimal models} \label{S:orbits}

Questions about the structure of the set of minimal models
of a dynamical system, including how to calculate a minimal model,
have been studied in two
different contexts. Bruin-Molnar~\cite{Bruin3}, in addressing questions
about integer points in orbits, consider the problem over number fields.
Rumely~\cite{Rumely} approaches the problem
over a complete algebraically closed nonarchimedean field and applies
Berkovich space methods to solve the problem.
We will use input from both publications
to devise an algorithm that finds representatives
of all $\GL(2,\Z)$-orbits of minimal models of a given endomorphism.
Furthermore, Rumely
shows that the valuation of the resultant factors through
a map from the Berkovich projective line~$\PBerk$ to~$\R$ and proves
that for even degree maps, the minimal valuation is achieved at a single point
in~$\PBerk$. However, this point does not necessarily correspond
to a model defined over~$\Q_p$. We will use his results to show that also
over~$\Q_p$, the minimal valuation of the resultant is obtained for a
unique $\GL(2,\Z_p)$-orbit of models; see Proposition~\ref{prop:even minimal} below.

To determine a representative from each orbit, we recall the key points of the
algorithm of Bruin-Molnar and then adapt it to our purposes.
Given a model $[F : G]$ of an endomorphism~$f$, where
$F, G \in \Q[x,y]_d$, we can scale $F$ and~$G$ by some $\lambda \in \Q^\times$,
which results in the model $[\lambda F, \lambda G]$ of the same~$f$,
and we can conjugate it by some element~$\gamma$ of~$\GL(2, \Q)$.
In section~\ref{sect_ds}, we defined an action of $\gamma \in \GL(2,\Q)$ on~$f$
that avoids introducing denominators. As in Bruin-Molnar, these two operations
combine to act on~$[F : G]$ by~$(\lambda, \gamma = \smm{a}{b}{c}{d})$
resulting in
\[ [\lambda F^\gamma : \lambda G^\gamma]
      = \bigl[\lambda \bigl(d F(ax+by, cx+dy) - b G(ax+by, cx+dy)\bigr) :
              \lambda \bigl(-c F(\ldots) + a G(\ldots)\bigr)\bigr] \,.
\]
Acting by~$(\lambda^{-d-1}, \lambda I_2)$ has no effect (here $I_2$ denotes
the $2 \times 2$ identity matrix), so we can assume $\gamma$ to have integral entries.
Recall that we are interested in \emph{minimal} models of~$f$, which are
integral models with minimal absolute value of the resultant.
According to~\cite{Bruin3}*{Proposition 2.2}, we have that
\begin{equation} \label{E:change}
    \Res(\lambda F^{\gamma}, \lambda G^{\gamma}) = \lambda^{2d}\det(\gamma)^{d^2+d}\Res(F,G)\,.
\end{equation}
Each prime can be considered separately, and
\cite{Bruin3}*{Proposition~6.3} shows that we need only
consider primes that divide the resultant of the given integral model.
We can therefore make the resultant smaller if, for a prime~$p$ dividing the resultant,
we can find
$\gamma \in \Mat(2,\Z) \cap \GL(2,\Q)$ such that the gcd of the coefficients
of $F^\gamma$ and~$G^\gamma$ is~$\alpha$ with
$2v_p(\alpha) > (d+1)v_p(\det(\gamma))$, where $v_p(\cdot)$ is the
(normalized) $p$-adic valuation.

Further, \cite{Bruin3}*{Proposition~2.12} shows that it is sufficient to consider affine
transformations. Let $p$ be a prime dividing the resultant and
consider the affine transformation and scaling factor
\begin{equation*}
    \gamma = \begin{pmatrix} p^{e_2} & \beta \\ 0 & 1 \end{pmatrix} \qquad \text{and} \qquad
    \lambda = p^{-e_1}\,.
\end{equation*}

In this form, the condition for the power of~$p$ dividing the resultant to decrease then becomes
\begin{equation} \label{eq1}
    2 e_1 > (d+1) e_2 \,,
\end{equation}
where $e_1$ is the minimal $p$-adic valuation of a coefficient
of
\[ F^\gamma = F(p^{e_2} x + \beta y, y) - \beta G(p^{e_2} x + \beta y, y) \quad\text{or}\quad
   G^\gamma = p^{e_2} G(p^{e_2} x + \beta y, y) \,.
\]

We now make use of Rumely's results from~\cite{Rumely}.
We write~$\PBerk$ for the Berkovich projective line over~$\C_p$, the completion
of the algebraic closure of~$\Q_p$, and we denote the ``Berkovich upper half space''
$\PBerk \setminus \PP^1(\C_p)$ by~$\HBerk$.
Fix an endomorphism~$f$ of~$\PP^1$ over~$\C_p$.
Rumely shows that there is a continuous, piecewise affine with integral slopes
(with respect to the logarithmic distance on~$\HBerk$) and convex map
$\ordRes_f \colon \HBerk \to \R_{\ge 0}$ such that
$v_p(\Res(f^\gamma)) = \ordRes_f(\gamma \cdot \zeta_G)$ for all
$\gamma \in \GL(2,\C_p)$, where $\zeta_G \in \PBerk$ is the Gauss point.
Here, $\Res(f^\gamma)$ denotes the resultant of a representative $[F : G]$ of~$f^\gamma$
that is scaled so that $F$ and~$G$ have $p$-adically integral coefficients with one
of them a unit (i.e., we take the maximal~$e_1$ in the notation above).
The orbit of~$\zeta_G$ under~$\GL(2,\Q_p)$ consists of the
set~$\calV$ of vertices of a subtree~$\calT$ of~$\HBerk$ whose edges have length~$1$
in the logarithmic metric; the vertices have degree~$p+1$.

The $\GL(2,\Z_p)$-orbits of minimal models of~$f$ then correspond to the
points in~$\calV$ in which $\ordRes_f|_\calV$ takes its minimal value.
Note that it is possible that $\ordRes_f$ will take on a smaller value at a
point of $\calT$ which is not a vertex, corresponding to a model
defined over an extension of $\Q_p$,
see Example~\ref{E:min_res}. The restriction of $\ordRes_f$ to~$\calT$
is still piecewise affine and convex. This implies that a point in~$\calV$
is a minimizer of~$\ordRes_f|_\calV$ if (and only if) it is a local minimum, in the sense that
$\ordRes_f$ does not take a strictly smaller value at a neighboring vertex.
It also implies that at each vertex, there is at most one edge
leading to a vertex with strictly smaller value, and following these edges leads
to a minimum. As noted by Rumely, this can be used to simplify the Bruin-Molnar algorithm.
We obtain the following procedure for finding a $p$-adically minimal model.

We assume that a \emph{normalized} model $f = [F : G]$ is given, i.e.,
such that $F, G \in \Z_p[x,y]_d$, where at least one coefficient is a $p$-adic unit.

\begin{algo} \label{Algo:min} \strut\vspace{-1ex} 
  \begin{enumerate}[1.]
    \item[] \textbf{Input:} A normalized model $f = [F:G]$ of a dynamical system
            of degree~$d$ over~$\Z_p$.
    \item[] \textbf{Output:} $\gamma_0 \in \GL(2, \Q_p)$
             and $f' = f^{\gamma_0}$ such that $f'$ is a $p$-adically minimal model for $f$.
    \item Let $T = \bigl\{\smm{1}{0}{0}{p}\bigr\}
                      \cup \bigl\{\smm{p\mathstrut}{a}{0}{1} : a \in \Z,\; 0 \le a < p\bigr\}$.
          Set $\gamma_0 = \smm{1}{0}{0}{1}$.
    \item \label{step2}
          For $\gamma \in T$, compute $f^\gamma = [\lambda F^\gamma : \lambda G^\gamma]$,
          where $\lambda$ is chosen so that the resulting model is normalized. \\
          If we come from step~\ref{step3}, then we leave out the one~$\gamma$
          that would bring us back to the $\GL(2,\Z_p)$-orbit of the previously considered model.
    \item \label{step3}
          If $v_p(\Res(f^\gamma)) < v_p(\Res(f))$ for some $\gamma$,
          then replace $f$ with~$f^\gamma$ and $\gamma_0$ with $\gamma_0 \gamma$.\\
          If $v_p(\Res(f^\gamma)) \geq d$ for $d$ even or $\geq 2d$ for $d$ odd, then go to step~\ref{step2}.
    \item Otherwise, return~$f$ and~$\gamma_0$.
  \end{enumerate}
\end{algo}

The set~$T$ contains representatives~$\gamma$ that map $\zeta_G$
to each of its $p+1$~neighbors in~$\calT$.
If we have used $\gamma = \smm{p}{a}{0}{1}$ to reduce the valuation of the resultant,
then we exclude $\smm{1}{0}{0}{p}$ when we carry out step~\ref{step2} the next time;
if we have used $\gamma = \smm{1}{0}{0}{p}$, then we exclude $\smm{p}{0}{0}{1}$.
Note that from~\eqref{E:change}, we can deduce that the difference
of the values of~$\ordRes_f$ at neighboring vertices of~$\calT$ is divisible
by~$d$ when $d$ is even and by~$2d$ when $d$ is odd. So if the value for the
model we are considering is smaller than $d$ or~$2d$, respectively, then the model
must be minimal.

This algorithm is essentially an enumerative approach similar to both the algorithms
in Rumely~\cite{Rumely} and Bruin-Molnar~\cite{Bruin3}. In the case of Rumely, there
is additional logic to limit the number of directions to consider and to compute how
far to move in each direction. In the case of Bruin-Molnar, a set of inequalities
is solved to determine the $a$ to use in step~\ref{step2}. For reasonably small primes,
this enumerative approach is sufficient. For larger primes, a version
of the Bruin-Molnar inequality solver can be used to speed up steps \ref{step2} and~\ref{step3}.

If the degree~$d$ of~$f$ is even, then \eqref{E:change} shows that the change of~$\ordRes_f$
along each edge is congruent to~$d^2+d$ modulo $2d$ and so is never zero.
Consequently, if we have found a
minimizing vertex, then $\ordRes_f$ will strictly increase along all edges emanating
from it, which implies that this vertex is the unique minimizer. Since this holds
for each prime~$p$, we obtain a proof of the following
statement, which answers Question~6.2 in~\cite{Bruin3} in the
affirmative in the strongest possible sense.
(The argument is already in~\cite{Rumely}*{p.~280}, if somewhat implicit.)

\begin{proposition}\label{prop:even minimal}
  Let $f \colon \PP^1 \to \PP^1$ be defined over~$\Q$.
  If the degree of $f$ is even, then $f$ has
  a single $\GL(2,\Z)$-orbit of minimal models.
\end{proposition}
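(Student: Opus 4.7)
The plan is to work prime-by-prime: I show that for each prime~$p$, the $p$-locally minimal models form a single $\GL(2,\Z_p)$-orbit when $d$ is even, and then use the standard local-to-global principle (as in~\cite{Bruin3}, and implicit in~\cite{Rumely}*{p.~280}) to conclude a unique global $\GL(2,\Z)$-orbit.

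For the local statement, I use Rumely's framework exactly as recalled above: the $p$-adic valuation of the resultant of a suitably normalized $\GL(2,\Q_p)$-conjugate of~$f$ is captured by $\ordRes_f|_\calT$, which is continuous, convex, and piecewise affine with integer slopes on the tree~$\calT$, and $\GL(2,\Z_p)$-orbits correspond to vertices in~$\calV$. Thus an orbit is $p$-minimal iff the corresponding vertex $v$ is a local minimum on~$\calV$. The key computation is the change of $v_p(\Res)$ along a single edge of~$\calT$: traversing such an edge amounts to composing with some $\gamma \in \GL(2,\Q_p)$ with $v_p(\det\gamma) = 1$ together with a normalizing scalar~$\lambda$, so by~\eqref{E:change} the change is of the form $2d\,v_p(\lambda) + d(d+1)$, which modulo $2d$ equals $d(d+1) \bmod 2d = d$ since $d+1$ is odd. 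Hence the (integer) slope of $\ordRes_f$ along any edge is nonzero.

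Given this, uniqueness of the minimum follows quickly. At a minimizing vertex~$v^\ast$, each of the $p+1$ slopes emanating from $v^\ast$ is $\ge 0$, hence $\ge 1$ by the previous paragraph. By convexity of $\ordRes_f$ along every geodesic of~$\calT$, these slopes remain $\ge 1$ along all subsequent edges, so $\ordRes_f$ is strictly increasing on every path leaving~$v^\ast$. Therefore $v^\ast$ is the unique minimizer on~$\calV$, yielding a unique $\GL(2,\Z_p)$-orbit of $p$-minimal models at each prime. To globalize, two globally minimal integral models of~$f$ are $p$-minimal at every prime~$p$ dividing the resultant and so $\GL(2,\Z_p)$-conjugate at each such prime; using that $\GL(2,\Z) = \GL(2,\Q) \cap \prod_p \GL(2,\Z_p)$, together with the handling of the scaling factor~$\lambda$ from Bruin-Molnar, this forces the two models into a single $\GL(2,\Z)$-orbit.

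The main obstacle I expect is not the tree-convexity argument, which is essentially a one-line calculation once Rumely's machinery is taken as input, but rather cleanly packaging the globalization: one must track how the scaling~$\lambda$ interacts with the $\GL(2,\Q)$-action, so that $\GL(2,\Z_p)$-equivalence at every prime really does descend to $\GL(2,\Z)$-equivalence over~$\Z$ (and not merely up to some central or stabilizer ambiguity).
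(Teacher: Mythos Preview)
Your proposal is correct and follows essentially the same route as the paper: the core step is the observation (via~\eqref{E:change}) that the change of $\ordRes_f$ along any edge of~$\calT$ is $\equiv d(d+1) \equiv d \pmod{2d}$ when $d$ is even, hence nonzero, so convexity forces a unique minimizing vertex at each prime. The paper is terser about the local-to-global passage (it simply says ``Since this holds for each prime~$p$''), while you flag it explicitly as needing care; but the underlying argument is the same.
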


It is possible that the minimal value of~$\ordRes_f$ on~$\HBerk$
is not attained on~$\calT$; see Example~6.2 in~\cite{Rumely}. The following
example shows that it is also possible that the minimum is attained on~$\calT$,
but not on~$\calV$.

\begin{example}\label{E:min_res}
    Looking more closely at Example~6.1 from Rumely~\cite{Rumely},
    we consider the endomorphism
    \begin{align*}
        f \colon \qquad \PP^1 &\To \PP^1 \\
            (x:y) &\longmapsto [x^2 - p y^2 : x y]\,,
    \end{align*}
    where $p$ is a prime. In this case, the minimum value of $\ordRes_f$
    over the algebraic closure occurs between two vertices of~$\calT$.
    Specifically, $\Res(f) =  -p$ is minimal over~$\GL(2,\Q_p)$, but going
    to the ramified quadratic extension $\Q_p(\sqrt{p})$,
    for $\gamma = \smm{\sqrt{p}}{0}{0}{1}$
    we have, after normalizing, $\Res(f^{\gamma}) = -1$. The next vertex of~$\calT$
    that lies in this direction corresponds to conjugating by $\alpha = \smm{p}{0}{0}{1}$
    and has $\Res(f^{\alpha}) = -p^3$. In particular, the function~$\ordRes_f$ has
    slope $-d = -2$ for half
    of the edge containing the minimum~$f^{\alpha}$
    and $3d = 6$ for the remaining half,
    giving a net increase of~$d = 2$ along the edge.
\end{example}

Rumely shows that when the degree~$d$ is odd, $\ordRes_f$ achieves
its minimum value at a unique point or along an interval in~$\HBerk$.
If this set meets~$\calV$, then the intersection is the set
of minimizers of~$\ordRes_f|_\calV$, and this set is either one point or
consists of the vertices in a path in~$\calT$. If the minimizing set
meets~$\calT$, but not~$\calV$, then the intersection with~$\calT$ must be contained in the interior
of an edge of~$\calT$. Otherwise, Proposition~3.5
of~\cite{Rumely} implies that $\ordRes_f|_\calT$ has a unique minimum,
which is attained at an interior point of an edge.
In both of these last two cases, the set of minimizers
of~$\ordRes_f|_\calV$ consists either of one or both of the endpoints of this edge.
(We can indeed have two minimizing vertices in this case;
this is what happens in Example~6.4 in~\cite{Rumely}.)
So in all cases, the subset of~$\calV$ corresponding to $\GL(2,\Z_p)$-orbits
of $p$-minimal models of~$f$ is either one point or consists of the vertices
in a path. This path can have any length. This is demonstrated
by the following example, which extends Example~6.1 of~\cite{Bruin3}.

\begin{proposition}\label{prop:odd minimal}
    Let $0 \neq c \in \Z$ and $n$ a positive integer. Define
    \begin{align*}
        f  \colon \qquad \PP^1 &\To \PP^1 \\
        (x : y) &\longmapsto [x^{2n+1} - c^{n+1} y^{2n+1} : x^n y^{n+1}].
    \end{align*}
    Let $c = \pm \prod_{i=1}^s p_i^{e_i}$
    be the prime factorization of~$c$. Then
    $f$ has exactly $\prod_{i=1}^s (e_i+1)$ distinct $\GL(2,\Z)$-orbits of minimal
    models, one for each positive divisor of~$c$.
\end{proposition}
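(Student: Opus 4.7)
The plan is to construct $\prod_{i=1}^s(e_i+1)$ explicit minimal models, one for each positive divisor of~$|c|$, to show they lie in pairwise distinct $\GL(2,\Z)$-orbits, and then to apply Rumely's convexity results to conclude they exhaust all such orbits. First, I would compute $|\Res(f)| = |c|^{n(n+1)}$ directly by exploiting the factorization $G = x^n \cdot y^{n+1}$, so that $\Res(F,G)$ is evaluated by substituting the roots of~$G$ into~$F$. For each positive divisor~$m$ of~$|c|$, I would then form the conjugate of~$f$ by $\gamma_m = \smm{m}{0}{0}{1}$ and divide out by the content~$m^{n+1}$ of the coefficients. A short computation yields the integral model
\[ f_m = [\,m^n x^{2n+1} - (c/m)^{n+1} y^{2n+1} \,:\, x^n y^{n+1}\,], \]
and the transformation formula~\eqref{E:change} shows $|\Res(f_m)| = |c|^{n(n+1)} = |\Res(f)|$.

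The heart of the proof is the prime-by-prime analysis. For each $p = p_i$ with $e = e_i$, I would apply the framework preceding the proposition: the function~$\ordRes_f$ is convex and piecewise affine on the subtree~$\calT$, and its minimizers on~$\calV$ form either a point or a path. Computing the slopes of~$\ordRes_f$ at the Gauss point~$\zeta_G$ along each of the $p+1$ tree directions, one finds slope~$0$ in the direction of~$\smm{p}{0}{0}{1}$ (because the maximal content extracted from~$F^\gamma$ and~$G^\gamma$ is $p^{n+1}$, which exactly cancels the $\det(\gamma)^{d^2+d}$ contribution from~\eqref{E:change}) and strictly positive slope in every other direction: for~$\smm{1}{0}{0}{p}$ this is immediate from the same formula, and for $\smm{p}{a}{0}{1}$ with $1 \le a \le p-1$ the key input is that $v_p(c^{n+1}) \ge n+1 \ge 2$, so a short $p$-adic expansion of~$F^\gamma$ and~$G^\gamma$ shows the content one can extract is never large enough to offset the determinant contribution. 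Iterating the same computation at each vertex $\gamma_{p^k}\cdot\zeta_G$ for $0 \le k \le e$ works identically so long as the analogue of~$c$ at that vertex remains divisible by~$p$; at $k = e$, it becomes a $p$-adic unit and \emph{every} direction now yields strictly positive slope. Rumely's convexity result then forces the set of $\ordRes_f$-minimizing vertices in~$\calV$ to be exactly the path from~$\zeta_G$ to $\gamma_{p^e}\cdot\zeta_G$, yielding $e+1$ distinct $\GL(2,\Z_p)$-orbits of $p$-adic minimal models, indexed by $k \in \{0,1,\ldots,e\}$, and realized by the $f_m$ with $v_p(m) = k$.

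To pass from local to global, I would observe that distinct positive divisors $m \ne m'$ of~$|c|$ satisfy $v_{p_i}(m) \ne v_{p_i}(m')$ at some prime~$p_i$; at that prime, $f_m$ and~$f_{m'}$ correspond to different vertices of the local minimizing path, hence to different $\GL(2,\Z_{p_i})$-orbits, and \emph{a fortiori} to different $\GL(2,\Z)$-orbits. Conversely, any $\GL(2,\Z)$-minimal model~$g$ of~$f$ is $p$-minimal for every prime, hence $\GL(2,\Z_{p_i})$-equivalent to some~$f_{m(p_i)}$ at each $p_i \mid c$ by the local classification; a standard strong approximation argument for~$\SL(2)$ then produces a single element of~$\GL(2,\Z)$ realizing all of these local equivalences simultaneously, showing that~$g$ lies in the $\GL(2,\Z)$-orbit of~$f_m$ with $m = \prod_i p_i^{v_{p_i}(m(p_i))}$. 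The main technical obstacle I anticipate is the uniform slope computation in the transverse $\smm{p}{a}{0}{1}$ directions: one must handle all residues $a \bmod p$, and in the potentially dangerous subcase $a^n \equiv 1 \pmod{p}$ push the $p$-adic expansion to higher order (taking care of the further subcase $p \mid n+1$) to confirm the slope is indeed strictly positive.
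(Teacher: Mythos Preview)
Your approach is correct and genuinely different from the paper's. The paper gives a short, fully elementary argument: after exhibiting the models~$f_m$, it takes a general affine conjugation $\gamma = \smm{\alpha}{\beta}{0}{\delta}$ (with coprime integral entries, $\alpha,\delta>0$, $|\beta|<\alpha$), writes out $F^\gamma$ and~$G^\gamma$ explicitly, and observes that for the result to be minimal one needs $\alpha^{n+1}\delta^{n+1}$ to divide every coefficient. Inspecting the $y^{2n+1}$-coefficient of~$G^\gamma$ forces $\alpha \mid \beta$, hence $\beta=0$; then coprimality of $\alpha,\delta$ together with the shape of~$F^\gamma$ forces $\delta=1$ and $\alpha \mid c$. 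No Berkovich input or local-to-global patching is needed. By contrast, your argument computes the slope of~$\ordRes_f$ along every edge at each vertex on the path to identify the minimizing segment via Rumely's convexity, and then invokes a strong-approximation style argument to globalize. This is heavier but has the virtue of illustrating exactly how the general structural results of the section play out in a concrete family. Two small comments: first, your ``dangerous subcase'' $a^n \equiv 1 \pmod p$ is actually harmless, since the $y^{2n+1}$-coefficient of~$G^\gamma$ is $p a^n$, which pins the content of $(F^\gamma,G^\gamma)$ at exactly~$p$, giving strictly positive slope without further expansion; second, the local-to-global step does not really require strong approximation for~$\SL_2$ --- once you know $g$ and~$f_m$ lie in the same $\GL(2,\Z_p)$-orbit for every~$p$, the conjugating element of~$\PGL(2,\Q)$ is (after clearing denominators to a primitive integral matrix) in~$\GL(2,\Z_p)$ for all~$p$, hence in~$\GL(2,\Z)$.
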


\begin{proof}
  Write $c = r s$ with $r, s \in \Z$. Acting on the given model
  by $(\lambda, \gamma) = \bigl(r^{-n-1}, \smm{r}{0}{0}{1}\bigr)$,
  we obtain the minimal model $[r^n x^{2n+1} - s^{n+1} y^{2n+1} : x^n y^{n+1}]$.
  The models associated to the factorizations $c = r s$ and $c = r' s'$
  are in the same $\GL(2,\Z)$-orbit if and only if the corresponding
  matrices differ multiplicatively by a scalar multiple of a matrix in~$\GL(2,\Z)$,
  which is the case if and only if $r' = \pm r$. So we obtain a distinct $\GL(2,\Z)$-orbit
  of minimal models for each positive divisor~$r$ of~$c$.

  To see that these models cover all $\GL(2,\Z)$-orbits, we consider
  an affine transformation \hbox{$\gamma = \smm{\alpha}{\beta}{0}{\delta}$}
  with $\alpha, \beta, \delta \in \Z$ coprime and $\alpha, \delta > 0$.
  Since we are interested in orbits under~$\GL(2,\Z)$, we can assume that $|\beta| < \alpha$.
  Denoting the polynomials in the given model by $F$ and~$G$, we have that
  \[ F^\gamma = \delta (\alpha x + \beta y)^{2n+1} - \delta^{2n+2} c^{n+1} y^{2n+1}
                  - \beta (\alpha x + \beta y)^n \delta^{n+1} y^{n+1}
  \]
  and
  \[ G^\gamma = \alpha (\alpha x + \beta y)^n \delta^{n+1} y^{n+1} \,. \]
  If this is to lead to a minimal model, $\det(\gamma)^{n+1} = \alpha^{n+1} \delta^{n+1}$
  must divide all coefficients of $F^\gamma$ and~$G^\gamma$. Considering the
  $y^{2n+1}$~term in~$G^\gamma$, we see that $\alpha$ must divide~$\beta$,
  so that $\beta = 0$. Then $\alpha$ and~$\delta$ are coprime, and
  \[ F^\gamma = \alpha^{2n+1} \delta x^{2n+1} - \delta^{2n+2} c^{n+1} y^{2n+1} \,, \]
  so $\delta$ divides~$\alpha$ and $\alpha$ divides~$c$. Since $\alpha$ and~$\delta$
  are coprime, this means that $\delta = 1$ and $\alpha = r$ with $c = rs$ a factorization
  as above with $r > 0$.
\end{proof}

\begin{remark}
    Note that $c = \pm 1$ results in a map with a single orbit, so it is
    possible for odd degree maps to have a single orbit of minimal models.
\end{remark}

To get representatives of \emph{all} $\GL(2,\Z_p)$-orbits of minimal models
of a given dynamical system~$f = [F : G]$, we modify the algorithm
in the following way. Traverse the vertices of $\calT$ until the minimal value
of~$\ordRes_f$ is attained. Then, find all vertices of~$\calT$ that attain that minimum;
these will all lie on a path in~$\calT$, so can be determined one edge at a time.
Note that the first minimal model we find could correspond to a
vertex in the interior of this path, so we may have to search in two directions
to find all vertices in the path. The algorithm below returns a set of pairs $(f^\gamma, \gamma)$
with $\gamma \in \GL(2,\Q_p)$ such that the~$f^\gamma$ represent all $\GL(2,\Z_p)$-conjugacy
classes of minimal models of~$f$.

\begin{algo} \strut\vspace{-1ex} 
  \begin{enumerate}[1.]
    \item[] \textbf{Input:} A normalized model $f = [F:G]$ of a dynamical system
            of degree~$d$ over~$\Z_p$.
    \item[] \textbf{Output:} A list of pairs $(f',\gamma)$ so that $f' = f^{\gamma}$
            is a minimal model for $f$, \\
            \strut\hphantom{\textbf{Output:}} each representing a distinct $\GL(2,\Z_p)$-orbit.
    \item Let $T = \bigl\{\smm{1}{0}{0}{p}\bigr\}
                      \cup \bigl\{\smm{p\mathstrut}{a}{0}{1} : a \in \Z,\; 0 \le a < p\bigr\}$.
    \item Use Algorithm~\ref{Algo:min} to find a minimal model~$f_0 = f^{\gamma_0}$ of~$f$. \\
          Set $M \leftarrow \{(f_0, \gamma_0)\}$ and $f \leftarrow f_0$.
    \item Determine $S = \{\gamma \in T : v_p(\Res(f^\gamma)) = v_p(\Res(f))\}$.
    \item For each element $\gamma \in S$ (there are at most two),
          call $\text{\textbf{search}($f^\gamma$, $\gamma_0 \gamma$, $\gamma$)}$.
    \item Return $M$.
  \end{enumerate}
  \textbf{search}($f$, $\gamma_0$, $\gamma$):\vspace{-1ex}
  \begin{enumerate}[a.]
    \item \label{stepa}
          Set $M \leftarrow M \cup \{(f, \gamma_0)\}$.
    \item Determine $S = \{\gamma' \in T : \gamma \gamma' \notin p \GL(2,\Z_p),
                                            v_p(\Res(f^{\gamma'})) = v_p(\Res(f))\}$.
    \item If $S = \{\gamma'\}$, then set
          $(f, \gamma_0, \gamma) \leftarrow (f^{\gamma'}, \gamma_0 \gamma', \gamma')$;
          go to step~\ref{stepa}.
    \item Otherwise, $S = \emptyset$. Return.
  \end{enumerate}
\end{algo}

In the call to~\textbf{search}, the last argument~$\gamma$ specifies
the direction we come from. See the discussion after Algorithm~\ref{Algo:min}
for what the condition $\gamma \gamma' \notin p \GL(2,\Z_p)$ amounts to
in terms of directions in Berkovich space.

To get representatives of all $\GL(2\,\Z)$-orbits of minimal
models, we run the above algorithm for each prime dividing the resultant of the
given model, applying the returned~$\gamma_0$ to each of the models obtained so far.

We finally note that any $\GL(2,\Z)$-orbit splits into at most two
$\SL(2,\Z)$-orbits and that the action of $(1, \smm{-1}{0}{0}{1})$
preserves the height of the model. It is therefore sufficient to look
at the $\SL(2,\Z)$-orbits of the representatives of the $\GL(2,\Z)$-orbits
when we want to find a reduced model.


\begin{bibdiv}
\begin{biblist}

\bib{Magma}{article}{
   author={Bosma, Wieb},
   author={Cannon, John},
   author={Playoust, Catherine},
   title={The Magma algebra system. I. The user language},
   note={Computational algebra and number theory (London, 1993)},
   journal={J. Symbolic Comput.},
   volume={24},
   date={1997},
   number={3-4},
   pages={235--265},
   issn={0747-7171},
   review={\MR{1484478}},
   doi={10.1006/jsco.1996.0125},
}

\bib{Bruin3}{article}{
   author={Bruin, Nils},
   author={Molnar, Alexander},
   title={Minimal models for rational functions in a dynamical setting},
   journal={LMS J. Comput. Math.},
   volume={15},
   date={2012},
   pages={400--417},
   issn={1461-1570},
   review={\MR{3015733}},
}

\bib{Hermite}{article}{
   author={Hermite, C.},
   title={Note sur la r\'eduction des fonctions homog\`enes \`a coefficients
          entiers et \`a deux ind\'etermin\'ees},
   language={French},
   journal={J. Reine Angew. Math.},
   volume={36},
   date={1848},
   pages={357--364},
   issn={0075-4102},
   review={\MR{1578622}},
}

\bib{Hutz12}{article}{
   author={Hutz, Benjamin},
   title={Determination of all rational preperiodic points for morphisms of PN},
   journal={Math. Comp.},
   volume={84},
   date={2015},
   number={291},
   pages={289--308},
   issn={0025-5718},
   review={\MR{3266961}},
}

\bib{Hutz11}{article}{
   author={Hutz, Benjamin},
   author={Manes, Michelle},
   title={The field of definition for dynamical systems on $\mathbb P^N$},
   journal={Bull. Inst. Math. Acad. Sin. (N.S.)},
   volume={9},
   date={2014},
   number={4},
   pages={585--601},
   issn={2304-7909},
   review={\MR{3309942}},
}

\bib{Ingram2}{article}{
   author={Ingram, Patrick},
   title={A finiteness result for post-critically finite polynomials},
   journal={Int. Math. Res. Not. IMRN},
   date={2012},
   number={3},
   pages={524--543},
   issn={1073-7928},
   review={\MR{2885981}},
}

\bib{Julia}{book}{
   author={Julia, Gaston},
   title={\'Etude sur les formes binaires non quadratiques \`a ind\'etermin\'ees
          r\'eelles, ou complexes, ou \`a ind\'etermin\'ees conjugu\'ees},
   language={French},
   series={M\'em. Acad. Sci. Inst. France},
   volume={55},
   date={1917},
   pages={296},
   review={\MR{3532882}},
   note={Also in Julia's {\OE}uvres, vol.~5.}
}

\bib{Levy}{article}{
   author={Levy, Alon},
   title={The space of morphisms on projective space},
   journal={Acta Arith.},
   volume={146},
   date={2011},
   number={1},
   pages={13--31},
   issn={0065-1036},
   review={\MR{2741188}},
}

\bib{Milnor}{article}{
   author={Milnor, John},
   title={Geometry and dynamics of quadratic rational maps},
   note={With an appendix by the author and Lei Tan},
   journal={Experiment. Math.},
   volume={2},
   date={1993},
   number={1},
   pages={37--83},
   issn={1058-6458},
   review={\MR{1246482}},
}

\bib{Rumely}{article}{
   author={Rumely, Robert},
   title={The minimal resultant locus},
   journal={Acta Arith.},
   volume={169},
   date={2015},
   number={3},
   pages={251--290},
   issn={0065-1036},
   review={\MR{3361223}},
   doi={10.4064/aa169-3-3},
}

\bib{Silverman10}{book}{
   author={Silverman, Joseph H.},
   title={The arithmetic of dynamical systems},
   series={Graduate Texts in Mathematics},
   volume={241},
   publisher={Springer, New York},
   date={2007},
   pages={x+511},
   isbn={978-0-387-69903-5},
   review={\MR{2316407}},
}

\bib{Silverman12}{article}{
   author={Silverman, Joseph H.},
   title={The field of definition for dynamical systems on $\mathbf P^1$},
   journal={Compositio Math.},
   volume={98},
   date={1995},
   number={3},
   pages={269--304},
   issn={0010-437X},
   review={\MR{1351830}},
}

\bib{sage}{article}{
   author={Stein, William},
   author={Joyner, David},
   title={SAGE: System for Algebra and Geometry Experimentation},
   journal={Communications in Computer Algebra (SIGSAM Bull.)},
   volume={39},
   date={2005},
   number={4},
   pages={61--64},
   note={\url{http://www.sagemath.org}.},
}

\bib{Stoll2011b}{article}{
   author={Stoll, Michael},
   title={Reduction theory of point clusters in projective space},
   journal={Groups Geom. Dyn.},
   volume={5},
   date={2011},
   number={2},
   pages={553--565},
   issn={1661-7207},
   review={\MR{2782185}},
   doi={10.4171/GGD/139},
}

\bib{Cremona2}{article}{
   author={Stoll, Michael},
   author={Cremona, John E.},
   title={On the reduction theory of binary forms},
   journal={J. Reine Angew. Math.},
   volume={565},
   date={2003},
   pages={79--99},
   issn={0075-4102},
   review={\MR{2024647}},
}

\end{biblist}
\end{bibdiv}

\end{document}